\def\R		{\mathbb{R}}
\def\N		{\mathbb{N}}
\def\d		{\:\mathrm{d}}
\def\span	{\operatorname{span}}
\def\sgn	{\operatorname{sgn}}
\def\F		{\mathscr{F}}
\renewcommand{\vec}[1]{\boldsymbol{#1}}
\def\G		{\mathcal{G}}
\def\j		{\vec{j}}
\def\k		{\vec{k}}
\def\l		{\vec{l}}
\def\e		{\vec{e}}
\def\a		{\vec{a}}
\def\b		{\vec{b}}
\def\A		{\vec{A}}
\def\B		{\vec{B}}
\def\x		{\vec{x}}
\def\y		{\vec{y}}
\def\z		{\vec{z}}
\def\u		{\vec{u}}
\def\v		{\vec{v}}
\def\w		{\vec{w}}
\def\P		{\mathcal{P}}
 \newtheorem{thm}{Theorem}[section]
 \newtheorem{cor}[thm]{Corollary}
 \newtheorem{lem}[thm]{Lemma}
 \theoremstyle{definition}
 \newtheorem{defn}[thm]{Definition}
 \theoremstyle{remark}
 \newtheorem{rem}[thm]{Remark}
 \newtheorem*{ex}{Example}
 \numberwithin{equation}{section}
  \newtheorem{bez}[thm]{Notation}
\begin{document}

\title[General GFT Convolution Thorem]{A General Geometric Fourier Transform \\Convolution Theorem}

\author[Bujack]{Roxana Bujack}
\address{%
Universit\"at Leipzig\\
Institut f\"ur Informatik\\
Johannisgasse 26\\
04103 Leipzig\\
Germany\\
}
\email{bujack@informatik.uni-leipzig.de}

\author[Scheuermann]{Gerik Scheuermann}
\address{%
Universit\"at Leipzig\\
Institut f\"ur Informatik\\
Johannisgasse 26\\
04103 Leipzig\\
Germany\\
}
\email{scheuermann@informatik.uni-leipzig.de}

\author[Hitzer]{Eckhard Hitzer}
\address{%
University of Fukui\\
Department of Applied Physics\\
3-9-1 Bunkyo\\
Fukui 910-8507\\
Japan\\
}
\email{hitzer@mech.u-fukui.ac.jp}

\date{\today}

\begin{abstract}
The large variety of Fourier transforms in geometric algebras inspired the straight forward definition of ``A General Geometric Fourier Transform`` in Bujack et al., Proc. of ICCA9, covering most versions in the literature. We showed which constraints are additionally necessary to obtain certain features like linearity, a scaling, or a shift theorem. In this paper we extend the former results by a convolution theorem.
\end{abstract}
\keywords{Fourier transform, geometric algebra, Clifford algebra, convolution, coorthogonal, geometric trigonometric transform.}

\maketitle
\section{Introduction}
The Fourier transform (FT) is a very important tool for mathematics, physics, computer science and engineering. Since geometric algebras \cite{C1878} usually contain continuous submanifolds of geometric square roots of minus one \cite{HA10,HHA11} there are infinitely many ways to construct new geometric Fourier transforms by replacing the imaginary unit in the classical definition of the FT. Every multivector comes with a natural geometric interpretation so the generalization is very useful. It helps to interpret the transform and apply it in a target oriented way to the specific underlying problem.
\par
Many different definitions of Fourier transforms in geometric algebras were developed. For example the Clifford Fourier transform introduced by Jancewicz \cite{Janc90} and expanded by Ebling and Scheuermann \cite{Ebl06} and Hitzer and Mawardi \cite{HM08}, the one established by Sommen in \cite{Som82} and re-established by B\"ulow \cite{Bue99}, the quaternionic Fourier transform by Ell \cite{Ell93} and later by B\"ulow \cite{Bue99}, the spacetime Fourier transform by Hitzer \cite{Hitz07}, the Clifford Fourier transform for color images by Batard et al. \cite{BBS08}, the Cylindrical Fourier transform by Brackx et al. \cite{BSS10}, the transforms by Felsberg \cite{Fels02} or Ell and Sangwine \cite{ES00,ES07}.
\par
We abstracted all of them in one general definition in \cite{BSH11}. There we analyzed the separation of constant factors from the transform, the scaling theorem and shift properties. Now we want to derive a convolution theorem. Although this paper is written to be self-contained we highly recommend to read the preceding work. Lemmata that were introduced there and will be needed again will be repeated, but the study of the proofs, that can be found in \cite{BSH11}, will grant a deeper understanding of the geometric context.
\par
We examine geometric algebras $\G^{p,q},p+q=n\in\N$ over $\R^{p,q}$ \cite{HS84} generated by the associative, bilinear geometric product with neutral element $1$ satisfying 
\begin{equation}
\begin{aligned}
 \e_j\e_k+\e_k\e_j=\epsilon_j\delta_{jk},
\end{aligned}
\end{equation}
for all $j,k\in\{1,...,n\}$ with the Kronecker symbol $\delta$ and 
\begin{equation}\begin{aligned}
\epsilon_j=\begin{cases}1&\forall j=1,...,p,\\-1&\forall j=p+1,...,n.\end{cases} 
\end{aligned}\end{equation}
For the sake of brevity we want to refer to arbitrary multivectors 
\begin{equation}\begin{aligned}\label{mv}
 \A=\sum\limits_{k=0}^n\sum\limits_{1\leq j_1<...<j_k\leq n}a_{j_1...j_k}\e_{j_1}...\e_{j_k}\in\G^{p,q},
\end{aligned}\end{equation}
$ a_{j_1...j_k} \in\R,$ as 
\begin{equation}\label{mimv}
 \A=\sum_{\j}a_{\j}\e_{\j}.
\end{equation}
where each of the $2^n$ multi-indices $\vec{j}\subseteq\{1,...,n\}$ indicates a basis multivector of dimension $k$ of $\G^{p,q}$ by $\e_{\j}=\e_{j_1}...\e_{j_{k}},$ $1\leq j_1<...<j_{k}\leq n,\e_{\emptyset}=\e_0=1$ and its associated coefficient $a_{\j}=a_{j_1...j_{k}}\in\R$. For each geometric algebra $\G^{p,q}$ we will write $\mathscr I^{p,q}=\{i\in\G^{p,q},i^2\in\R^-\}$ to denote the real multiples of all geometric square roots of minus one, compare \cite{HA10} and \cite{HHA11}. We chose the symbol $\mathscr I$ to be reminiscent of the imaginary numbers. 
\par
Throughout this paper we analyze multivector fields $\A:\R^{p^\prime,q^\prime}\to\G^{p,q},$ $p^\prime+q^\prime=m\in\N,p+q=n\in\N$. To keep notations short we will often denote the argument vector space by just $\R^m$, so please keep in mind, that it has a signature $p^\prime,q^\prime$, too.
\par
We defined the general \textbf{Geometric Fourier Transform} (GFT) $\F_{F_1,F_2}(\A)$ of a multivector field $\A:\R^{p^\prime,q^\prime}\to\G^{p,q},p^\prime+q^\prime=m\in\N,p+q=n\in\N$ in \cite{BSH11} by the calculation rule
\begin{equation}\begin{aligned}\label{gft}
 \F_{F_1,F_2}(\A)(\u):=\int_{\R^m}\prod_{f\in F_1}e^{-f(\x,\u)}\A(\x)\prod_{f\in F_2}e^{-f(\x,\u)}\d^m \x,
\end{aligned}\end{equation}
with $\x,\u\in\R^m$ and two ordered finite sets $F_1=\{f_1(\x,\u),...,f_{\mu}(\x,\u)\},$ $F_2=\{f_{\mu+1}(\x,\u),...,f_{\nu}(\x,\u)\}$ of mappings $f_l(\x,\u):\R^m\times \R^m\to\mathscr I^{p,q},\forall l=1,...,\nu$.
We proved some fundamental theorems in dependence on properties of the functions $f_l$, like existence, linearity, shift and scaling.
%
\section{Coorthogonality and Bases}
%
\begin{defn}
We call two vectors $\v,\w$ \textbf{orthogonal} ($\v\perp\w$) if $\v\cdot\w=0$ and \textbf{colinear} ($\v\parallel\w$) if $\v\wedge\w=0$.
\end{defn}
\begin{defn}\label{d:orthcolin2}
We call two blades $\A,\B$ \textbf{orthogonal} ($\A\perp\B$) if all of their generating vectors are mutually orthogonal and  \textbf{colinear} ($\A\parallel\B$) if all vectors from one blade are colinear to all vectors in the other one.
\end{defn}
For a vector $\v$ and a blade $\B=\b_1\wedge...\wedge \b_d$ the following equalities hold
\begin{equation}\begin{aligned}\label{coorth}
\v\perp\B\Leftrightarrow\v\B=\v\wedge\B\Leftrightarrow\v\cdot\B=0\Leftrightarrow\v\B=(-1)^{d}\B\v,\\
\v\parallel\B\Leftrightarrow\v\B=\v\cdot\B\Leftrightarrow\v\wedge\B=0\Leftrightarrow\v\B=(-1)^{d-1}\B\v,
\end{aligned}\end{equation}
compare \cite{HS84,Hes86}. That inspires the next definition.
\begin{defn}
We call two blades $\A$ and $\B$ \textbf{coorthogonal} if $\A\B=\pm \B\A$.
\end{defn}
\begin{bez}
A blade can alternatively be written as an outer product of vectors or as a geometric product of orthogonal vectors. For blades $\A=\a_1\wedge...\wedge \a_\mu$ and $\B=\b_1\wedge...\wedge \b_\nu$ we will use the notations $\span(\B):=\span(\b_1,...,\b_\nu)$, $\A\oplus\B:=\span(\A)\oplus\span(\B)\subseteq\R^{p,q}$, $\A\cap\B:=\span(\A)\cap\span(\B)\subseteq\R^{p,q}$, $\beta(\A,\B):=\dim(\A\cap\B)$ and $\alpha(\A,\B):=\dim(\A\oplus\B)=\mu+\nu-\beta(\A,\B)$. For a set of blades $B=\{\B_1,...,\B_d\},d\in\N$ we use the notation $\span(B)=\bigoplus_{k=1}^d\span (\B_k)$ and  $\alpha(B)=\dim(\span(B))$.
\end{bez}
\begin{lem}\label{l:orth_basis1}
The basis blades $\e_{\k}$ of $\G^{p,q}$ that are generated from an orthogonal basis of $\R^{p,q}$ are mutually coorthogonal.
\end{lem}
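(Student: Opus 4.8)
The plan is to fix two arbitrary basis blades $\e_{\j}$ and $\e_{\k}$ with $\j,\k\subseteq\{1,\dots,n\}$ and to verify directly that $\e_{\j}\e_{\k}=\pm\,\e_{\k}\e_{\j}$, which is exactly coorthogonality of the pair. The only input is that the basis is orthogonal, so its generators satisfy $\e_i\e_l=-\e_l\e_i$ for $i\neq l$ and $\e_i\e_i=\epsilon_i\in\R$. The point to keep in mind is that the second relation only ever produces \emph{real} scalars, which are central in $\G^{p,q}$, so reordering a product of basis vectors can never introduce anything worse than a nonzero real factor multiplying a reordered product.

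Concretely, I would normalise both sides. Concatenating $\e_{\j}=\e_{j_1}\cdots\e_{j_\mu}$ and $\e_{\k}=\e_{k_1}\cdots\e_{k_\nu}$, the product $\e_{\j}\e_{\k}$ is a word in the $\e_i$; repeatedly swapping adjacent unequal factors (each swap costs a sign) and collapsing adjacent equal factors via $\e_i\e_i=\epsilon_i$ (each collapse costs the central real factor $\epsilon_i$ and deletes the index $i$) brings the word into sorted form. The surviving indices are precisely those occurring an odd number of times, i.e. the symmetric difference $\j\,\triangle\,\k$, and the deleted ones are exactly the $\beta(\e_{\j},\e_{\k})$ common indices in $\j\cap\k$. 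Hence
\begin{equation}\begin{aligned}
\e_{\j}\e_{\k}=\lambda\,\e_{\j\,\triangle\,\k}\quad\text{and}\quad\e_{\k}\e_{\j}=\lambda'\,\e_{\j\,\triangle\,\k},\qquad\lambda,\lambda'\in\R\setminus\{0\},
\end{aligned}\end{equation}
with $\lambda,\lambda'=\pm\prod_{i\in\j\cap\k}\epsilon_i$ in either case, so in particular $\lambda=\pm\lambda'$. Since $\e_{\j\,\triangle\,\k}$ is common to both, comparing the two identities gives $\e_{\j}\e_{\k}=\pm\,\e_{\k}\e_{\j}$. The degenerate cases $\j=\emptyset$, $\k=\emptyset$, or $\j=\k$ are immediate, since then one factor is $1$ or the two products literally coincide.

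If one wants the explicit sign rather than just $\pm$, a count of the transpositions needed to move the block $\e_{\k}$ to the left of $\e_{\j}$ (each of the $\mu\nu$ index pairs contributing $-1$ unless the indices agree, followed by the $\beta(\e_{\j},\e_{\k})$ cancellations of coinciding factors) yields $\e_{\j}\e_{\k}=(-1)^{\mu\nu-\beta(\e_{\j},\e_{\k})}\e_{\k}\e_{\j}$; for the lemma only the parity statement is needed. There is no genuine obstacle here: the one thing to be careful about is that the scalars $\epsilon_i$ produced by squaring basis vectors are real and hence commute past everything, so they never disturb the sign bookkeeping — and this is exactly the place where orthogonality of the generating basis enters.
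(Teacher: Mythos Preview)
Your proof is correct and essentially follows the paper's own argument: both use the anticommutation relations $\e_i\e_l=-\e_l\e_i$ for $i\neq l$ together with $\e_i^2=\epsilon_i\in\R$, and both arrive at the explicit sign $\e_{\j}\e_{\k}=(-1)^{\mu\nu-\beta(\e_{\j},\e_{\k})}\e_{\k}\e_{\j}$. The paper simply asserts this formula in one line, whereas you supply the intermediate normal-form argument via the symmetric difference $\j\,\triangle\,\k$; this extra detail is sound but not a genuinely different route.
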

\begin{proof}
 All orthogonal basis vectors of $\R^{p,q}$ satisfy
\begin{equation}
\e_j\e_k=\begin{cases}-\e_k\e_j,&\text{for }j\neq k\in\N\\ \e_k\e_j,&\text{for }j= k\end{cases}
\end{equation}
in every geometric algebra $\G^{p,q}$. So for two basis blades $\e_{\j}=\e_{j_1,...,j_{\mu}},\e_{\k}=\e_{k_1,...,k_{\nu}},1\leq j_1<...<j_{\mu}\leq n,1\leq k_1<...<k_{\nu}\leq n$ with dimensions $\mu$, respectively $\nu$ we get
\begin{equation}\label{acom}
 \e_{\j}\e_{\k}=(-1)^{\mu\nu-\beta(\e_{\j},\e_{\k})}\e_{\k}\e_{\j},
\end{equation}
where $\beta(\e_{\j},\e_{\k})=|\{l\in\N,l\in\j \text{ and } l\in\k\}|$ is the number of indices appearing in both sets, respectively the dimension of the meet of the two blades. 
\end{proof}
%
%
\begin{lem}\label{l:orth_basis2}
 For two coorthogonal blades $\A$ and $\B$ there is an orthonormal basis $V=\{\v_1,...,\v_{\alpha(\A,\B)}\}$ of $\A\oplus\B\subseteq\R^{p,q}$ such that both can be expressed as real multiples of basis blades, that means $\A=\sgn(\A)|\A|\v_{j_1}...\v_{j_\mu}$ and $\B=\sgn(\B)|\B|\v_{k_1}...\v_{k_\nu}, a,b\in\R$  with the signum function being $1$ or $-1$.
\end{lem}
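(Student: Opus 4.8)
The plan is to construct the orthonormal basis $V$ by decomposing $\A\oplus\B$ into three mutually orthogonal pieces: the intersection $\A\cap\B$, a complement of it inside $\span(\A)$, and a complement of it inside $\span(\B)$. First I would handle the degenerate possibilities separately so the main argument is clean: if $\A$ and $\B$ are colinear then $\span(\A)\subseteq\span(\B)$ or vice versa (one is contained in the other, since colinearity of blades forces every vector of one to lie in the span of the other), and a single application of Gram--Schmidt to the larger span does the job; if they are orthogonal then $\beta(\A,\B)=0$ and one picks orthonormal bases of $\span(\A)$ and $\span(\B)$ independently and concatenates them. The coorthogonality hypothesis $\A\B=\pm\B\A$ is exactly what guarantees we are always in one of these controllable situations on the ``overlap'': I would show that coorthogonality implies that for each pair of generating vectors $\a_i,\b_j$ one has either $\a_i\perp\b_j$ or $\a_i\parallel\b_j$, so that after choosing a basis adapted to $\A\cap\B$ the remaining generators of $\A$ and of $\B$ split into an orthogonal part and a part living inside the intersection.

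Concretely, the key steps in order are: (1) using \eqref{coorth}, translate $\A\B=\pm\B\A$ into a statement about the generating vectors, namely that the subspace $\A\cap\B$ is spanned by vectors each colinear to (hence, after normalization, shared with) generators on both sides, while everything outside the intersection on the $\A$-side is orthogonal to everything outside on the $\B$-side; (2) choose an orthonormal basis $\v_1,\dots,\v_\beta$ of $\A\cap\B$ by Gram--Schmidt (permissible because $\A\cap\B$, being the span of blade generators, is non-degenerate in the relevant cases, or handle a null-vector obstruction directly); (3) extend it to an orthonormal basis $\v_1,\dots,\v_\mu$ of $\span(\A)$ and independently to $\v_1,\dots,\v_\beta,\v_{\mu+1},\dots$ of $\span(\B)$, using that the non-intersection parts are mutually orthogonal so the union stays orthonormal and has the right size $\alpha(\A,\B)=\mu+\nu-\beta$; (4) observe that since $\A$ is a blade whose span equals $\span(\v_{j_1},\dots,\v_{j_\mu})$, it must equal that basis blade up to a real scalar, and that scalar has modulus $|\A|$ and sign $\pm1$ — write it as $\sgn(\A)|\A|$ — and likewise for $\B$.

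The main obstacle I anticipate is step (2)–(3) in indefinite signature $\R^{p,q}$: Gram--Schmidt can fail when a subspace contains null vectors, so I would need either to argue that $\A\cap\B$ and the complementary pieces inherit non-degeneracy from the blade structure, or to phrase the normalization so that $\v_i^2=\pm1$ is allowed and ``orthonormal'' is understood in the indefinite sense, possibly invoking that a blade is by definition a product of vectors with $\v_i^2=\epsilon_i\ne0$ so its span is automatically non-degenerate. A secondary subtlety is bookkeeping the sign: reordering the chosen $\v$'s to match the increasing index convention $j_1<\dots<j_\mu$ only introduces a sign, which is absorbed into $\sgn(\A)$, so nothing is lost — but I would state this explicitly so the reader sees why the two blades can be written simultaneously over the \emph{same} ordered basis $V$. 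Once the basis is in hand, the identification of $\A$ and $\B$ with scalar multiples of basis blades is immediate from the fact that a nonzero blade is determined up to scalar by its span.
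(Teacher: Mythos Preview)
Your plan is essentially the paper's own proof: split $\A\oplus\B$ into $\A\cap\B$ and the two complements $\A_{\perp(\A\cap\B)}$, $\B_{\perp(\A\cap\B)}$, use coorthogonality to force these complements to be mutually orthogonal, then glue orthonormal bases of the three pieces. The paper's key computation (its equation \eqref{unabh->orth}) is exactly your step~(1) in its second formulation, ``everything outside the intersection on the $\A$-side is orthogonal to everything outside on the $\B$-side''; your extra care about indefinite signature and about the sign bookkeeping goes slightly beyond what the paper writes out.

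One caution on phrasing: your preliminary claim that coorthogonality forces ``each pair of generating vectors $\a_i,\b_j$'' to be parallel or perpendicular is false for \emph{arbitrary} generators. For instance $\A=(\e_1+\e_2)\wedge\e_2=\e_1\wedge\e_2$ and $\B=\e_1\wedge\e_3$ are coorthogonal, yet $\e_1+\e_2$ is neither parallel nor perpendicular to the generator $\e_1$ of $\B$. The correct statement is at the level of subspaces --- the complements of $\A\cap\B$ inside $\span(\A)$ and inside $\span(\B)$ are orthogonal --- and the vector-by-vector version only becomes true \emph{after} you have chosen generators adapted to the three-piece decomposition, so it is the output rather than the input of the argument. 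Since your concrete steps (1)--(4) already work with the subspace formulation, nothing in the actual proof breaks; just drop the generator-pair sentence from the write-up.
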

\begin{proof}
We know from \cite{HS84} that every blade $\A$ spans a vector space $\span(\A)$, that this vector space has an orthonormal basis, how it can be produced and that for $\span(\B)\subset \span(\A)$ there is a unique blade $\A_{\perp \B} :=\B^{-1}\cdot \A$ orthogonal to $\B$, such that $\span(\B)\cap \span(\A_{\perp \B})=\emptyset,\span(\B)\cup \span(\A_{\perp \B})=\span(\A)$.
\par
Therefore we can separate the space $\A\oplus\B=(\A\cap\B)\cup\span(\A_{\perp (\A\cap\B)})\cup\span(\B_{\perp (\A\cap\B)})$ into three disjoint parts and immediately know that $\A\cap\B$ is orthogonal to both $\A_{\perp (\A\cap\B)}$ and $\B_{\perp (\A\cap\B)}$. 
Since $\span(\A_{\perp (\A\cap\B)})\cap\span(\B_{\perp (\A\cap\B)})=\emptyset$ all basis vectors $\a_1,...,\a_{\mu-\beta(\A,\B)}$ of $\A_{\perp (\A\cap\B)}$ satisfy $\a_1,...,\a_{\mu-\beta(\A,\B)} \notin\span(\B_{\perp (\A\cap\B)})$ so $\forall j=1,...,\mu-\beta(\A,\B):\a_j\wedge\B_{\perp (\A\cap\B)}\neq0$. For the product of a vector and a blade we have
\begin{equation}\begin{aligned}\label{unabh->orth}
\vec a_j\B_{\perp (\A\cap\B)}=&\vec a_j\cdot \B_{\perp (\A\cap\B)}+\vec a_j\wedge \B_{\perp (\A\cap\B)}
\\=& (-1)^{\nu-\beta(\A,\B)-1}\B_{\perp (\A\cap\B)}\cdot\vec a_j+(-1)^{\nu-\beta(\A,\B)} \B_{\perp (\A\cap\B)}\wedge\vec a_j
\end{aligned}\end{equation} 
and since $\a_j\wedge\B\neq0$ necessarily $\vec a_j\cdot\vec \B=0$ has to be valid $\forall j=1,...,\mu-\beta(\A,\B)$ in order to satisfy coorthogonality. That is equivalent to 
$\A_{\perp (\A\cap\B)}\perp\B_{\perp (\A\cap\B)}$ and therefore unifying the orthonormal bases of all three parts form an orthonormal basis of $\A\oplus\B$. Let $\b_1,...,\b_{\nu-\beta(\A,\B)}$ be the basis of $\span(\B)$, $\vec c_1,...,\vec c_{\beta(\A,\B)}$ be the basis of $\A\cap\B$ then the blade $\A$ has can be written as $\A=\sgn(\A)|\A|\vec c_1,...,\vec c_{\beta(\A,\B)},\a_1,...,\a_{\mu-\beta(\A,\B)}$ and the blade $\B$ as $\B=\sgn(\B)|\B|\vec c_1,...,\vec c_{\beta(\A,\B)},\b_1,...,\b_{\nu-\beta(\A,\B)}$.
\end{proof}
\begin{rem}
An alternative proof can be achieved from looking at the geometric product $\A\B$ as detailed in \cite{Hitz10a} equation (45) and more generally in \cite{Hitz10b} equation (17). There the lowest order term is a product of all the cosines of the principal angles and the lowest +2 order term is a sum of summands each with the product of one sine of one principal angle times the cosines of all other principal angels. The transition from $\A\B$ to $\B\A$ then does not change the sign of the lowest order term, but changes the signs of all summands in the lowest +2 order grade part. Coorthogonality is therefore only possible if for every principal angle either the cosine is zero (the 2 associated principal vectors are perpendicular), or if the sine is zero (the 2
associated principal vectors are parallel). Hence all principal angles are in $\{0, \frac\pi2\}$.
\end{rem}
\begin{lem}\label{l:orth_basis3}
 Let $B=\{\B_1,...,\B_d\},d\in\N$ be non-zero mutually coorthogonal blades. Then there is an orthonormal basis $\v_1,...,\v_{\alpha(B)}$ of $\span(B)$ such that every $\B_k,k=1,...,d$ can be written as a real multiple of a basis blade, that means $\B_k=\sgn(\B_k)|\B_k|\v_{\j(k)}$ with $\v_{\j(k)}=\v_{j_1(k),...,j_{\mu}(k)}, \mu=\mu(k)=\dim(\B_k), |\B_k|\in\R$.
\end{lem}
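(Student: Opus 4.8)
The plan is to argue by induction on $d$. The case $d=1$ is immediate — a non-zero blade is a real multiple of the geometric product of any orthonormal basis of its span — and $d=2$ is precisely Lemma~\ref{l:orth_basis2}. So let $d\geq 2$ and assume the statement for $d-1$ mutually coorthogonal blades. Applying it to $\B_1,\dots,\B_{d-1}$ yields an orthonormal basis $\w_1,\dots,\w_r$ of $U:=\span(\{\B_1,\dots,\B_{d-1}\})$ and index sets $\j(k)\subseteq\{1,\dots,r\}$ with $\B_k=\sgn(\B_k)|\B_k|\,\w_{\j(k)}$ for $k<d$; since $\span(\B_k)=\span\{\w_i:i\in\j(k)\}$ we have $\bigcup_{k<d}\j(k)=\{1,\dots,r\}$. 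Extend $\w_1,\dots,\w_r$ to an orthonormal basis $\w_1,\dots,\w_n$ of $\R^{p,q}$. The remaining task is to modify this basis so that it also brings $\B_d$ into basis-blade form without disturbing $\B_1,\dots,\B_{d-1}$.

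The crucial point is how coorthogonality of $\B_d$ with the individual $\B_k$ controls $\span(\B_d)$. For each $k<d$ the blade $\B_d$ is coorthogonal with $\w_{\j(k)}$, and the reasoning used in the proof of Lemma~\ref{l:orth_basis2} (equivalently the principal-angle description in the remark following it, or a direct appeal to \eqref{coorth}) shows that $\span(\B_d)$ splits orthogonally along the coordinate subspace $\span\{\w_i:i\in\j(k)\}$, i.e.\ it is invariant under the orthogonal projection $\pi_{\j(k)}$ of $\R^{p,q}$ onto that subspace. The coordinate projections $\pi_T$ (onto $\span\{\w_i:i\in T\}$) commute and satisfy $\pi_S\pi_T=\pi_{S\cap T}$, $\pi_S+\pi_T-\pi_S\pi_T=\pi_{S\cup T}$ and $\Id-\pi_T=\pi_{\{1,\dots,n\}\setminus T}$, so the family of $T$ for which $\span(\B_d)$ is $\pi_T$-invariant is a Boolean subalgebra of $2^{\{1,\dots,n\}}$; it contains $\j(1),\dots,\j(d-1)$, hence their union $\{1,\dots,r\}$ and all atoms $T_1,\dots,T_m$ of the finite subalgebra these generate. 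Consequently $U=\bigoplus_{l}U_l$ orthogonally with $U_l:=\span\{\w_i:i\in T_l\}$, every $\span(\B_k)$ ($k<d$) is a sum of some of the $U_l$, and $\span(\B_d)=\bigl(\span(\B_d)\cap U\bigr)\oplus\bigl(\span(\B_d)\cap U^{\perp}\bigr)$ with orthogonal summands, the first one moreover equal to $\bigoplus_l\bigl(\span(\B_d)\cap U_l\bigr)$.

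To conclude, replace, inside each block $U_l$, the vectors $\{\w_i:i\in T_l\}$ by an orthonormal basis adapted to the subspace $\span(\B_d)\cap U_l$ (possible because $U_l$ is non-degenerate); this leaves each $\B_k$ ($k<d$) a real multiple of a basis blade, because $\span(\B_k)$ is a union of blocks. Renaming the resulting basis of $U$ again $\w_1,\dots,\w_r$, we now have $\span(\B_d)\cap U=\span\{\w_i:i\in I_d\}$ for some $I_d\subseteq\{1,\dots,r\}$. Adjoining an orthonormal basis $\vec t_1,\dots,\vec t_b$ of $\span(\B_d)\cap U^{\perp}$, which is orthogonal to all of $U$, we obtain an orthonormal basis $\w_1,\dots,\w_r,\vec t_1,\dots,\vec t_b$ of $U+\span(\B_d)=\span(B)$; with respect to it $\B_k=\sgn(\B_k)|\B_k|\w_{\j(k)}$ still holds for $k<d$, and $\B_d=\sgn(\B_d)|\B_d|\bigl(\prod_{i\in I_d}\w_i\bigr)\vec t_1\cdots\vec t_b$ is also a real multiple of a basis blade, which closes the induction.

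I expect the real work to be concentrated in the middle step: turning the separate splittings coming from pairwise coorthogonality into a single orthogonal decomposition of $\span(\B_d)$ that is simultaneously compatible with $\B_1,\dots,\B_{d-1}$, which is exactly what the passage to commuting coordinate projections and their Boolean algebra achieves. As in Lemma~\ref{l:orth_basis2} — and implicit there in expressions like $\B^{-1}\cdot\A$ — one should also assume throughout that the blades span non-degenerate subspaces, so that all the orthonormal bases and orthogonal projections used above exist.
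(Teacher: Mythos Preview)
Your proof is correct and takes a genuinely different route from the paper. The paper proceeds via an explicit iterative algorithm (Algorithm~\ref{alg1}): at each step it selects a vector $\c$ from a minimal-dimensional intersection of spans of the current blades, appends it to the basis, and contracts it out of every blade containing it; coorthogonality ensures that $\c$ is either contained in or orthogonal to each remaining blade (computation (\ref{unabh->orth})) and that the contracted family stays mutually coorthogonal (computation (\ref{coorth_bleibt})). You instead induct on $d$: pairwise coorthogonality makes $\span(\B_d)$ invariant under the orthogonal projection onto each $\span(\B_k)$, and since in the induction basis these are commuting coordinate projections, $\span(\B_d)$ is invariant under every element of the Boolean algebra they generate --- in particular under its atoms, which lets you re-adapt the basis block by block without disturbing $\B_1,\dots,\B_{d-1}$. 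The paper's approach is fully constructive and yields a concrete procedure; yours is more structural, isolating the underlying linear-algebraic mechanism (a family of commuting idempotents), and it makes the non-degeneracy hypothesis --- also used implicitly in the paper through inverses like $\c^{-1}\cdot\vec C_k$ --- more visible. One small point: your parenthetical ``possible because $U_l$ is non-degenerate'' also needs $\span(\B_d)\cap U_l$ to be non-degenerate, but this follows at once from the orthogonal decomposition of the non-degenerate $\span(\B_d)$ you have already established.
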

\begin{proof}
Algorithm 1 constructs this basis. 
\begin{algorithm}
\caption{Construction of the basis}
\label{alg1}
\begin{algorithmic}[1]
\REQUIRE $B,d$
\STATE $C=B$, $Basis=\emptyset$, $l=0$, $\forall k=1,...,d:\j(k)=(\emptyset)$,
\WHILE{$l< \alpha(C)$}
 \STATE $D=\bigcup_{M\subseteq C}(\bigcap_{C_k\in M}\span(\vec C_k))$
 \STATE choose $\vec D_k$ of minimal dimension in $D$, choose $\vec c\in\span(\vec D_k)$, 
 \STATE $Basis=Basis\cup \vec c$, $l=l+1$,
 \FOR{$k=1,...,d$}
  \IF{$\vec c\in\span(\vec C_k)$}
    \STATE $\vec C_k=\vec c^{-1}\cdot\vec C_k$, $\j(k)=(\j(k),l)$,
  \ENDIF
 \ENDFOR
\ENDWHILE
\ENSURE $Basis,\j(k),\forall k=1,...,d$
\end{algorithmic}
\end{algorithm}
\par
The set $D$ is the set of intersections of the subspaces spanned by all possible combinations of elements of $C$. 
The elements $\vec D_k$ of $D$ with minimal dimension satisfy $\forall \vec D_j\in D:\vec D_k\cap \vec D_j\in\{\emptyset,\vec D_k\}$, because otherwise $\vec D_k\cap \vec D_j$ would have lower dimension than $\vec D_k$ which is a contradiction. 
In both cases all generating vectors of $\vec D_k$ can be added to the basis, compare the proof of Lemma \ref{l:orth_basis2}. So the choice of any vector $\vec c\in \span(\vec D_k)$ will be successful.
\par
Once a vector is chosen there are two more cases that already appeared in the proof of Lemma \ref{l:orth_basis2}. In the case of $\vec c\notin\span(\vec C_k)$ follows, that $\vec c$ is orthogonal to $\vec C_k$, because of (\ref{unabh->orth}). 
In the case of $\vec c\in\span(\vec C_k)$ the multiplication of $\vec c$ to $\vec C_k$ in the algorithm always creates blades $\vec c\vec C_k$ of lower dimension orthogonal to $\vec c$, because of
 \begin{equation}\begin{aligned}\label{dim_senk}
 \vec c^{-1}\cdot \B=&\langle \vec c^{-1}\B\rangle_{\dim(\B)-1}.
 \end{aligned}\end{equation} 
Therefore the application of this operation to all blades in $C$ only leaves blades that are orthogonal to $\vec c$ but still coorthogonal amongst each other because of
 \begin{equation}\begin{aligned}\label{coorth_bleibt}
 \vec c^{-1}\vec C_j\vec c^{-1}\vec C_k\overset{(\ref{coorth})} =&(-1)^\mu\vec C_j\vec c^{-1}\vec c^{-1}\vec C_k\\
 \overset{(\vec c^{-1})^2\in\R}=&(-1)^\mu(\vec c^{-1})^2\vec C_j\vec C_k\\
 \overset{\text{prereq}}=&\pm(-1)^\mu(\vec c^{-1})^2\vec C_k\vec C_j\\
\overset{(\ref{coorth})}=&\pm(-1)^{\mu+\nu-1}\vec c^{-1}\vec C_k\vec c^{-1}\vec C_j.
 \end{aligned}\end{equation}
At the beginning $C$ spans the whole space $\span(C)=\span(B)$ but as the algorithm proceeds $\span(C)=\span(B)\setminus\span(Basis)$ such that $\span(C)$ and $\span(Basis)$ are orthogonal. Because of that the set $Basis$ is orthogonal at all times. The algorithm stops when $\alpha(C)$ vectors are in $Basis$.
So finally $\alpha(C)$ orthogonal vectors will be in $Basis$, which therefore in deed is an orthogonal basis of $\span(B)$, all elements of $C$ will have dimension zero and the algorithm will end returning the basis and how the blades can be constructed.
\end{proof}
So trivially spoken, coorthogonality of blades can as well be interpreted as coorthogonality of all their generating vectors, that means all their generating vectors are either orthogonal or colinear.
\begin{thm}\label{t:orth_basis}
 A finite number of blades are coorthogonal if and only if they are real multiples of basis blades of an orthonormal basis of $\R^{p,q}$.
\end{thm}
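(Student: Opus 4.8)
The plan is to recognise the theorem as a corollary of the lemmata already established, the only genuinely new ingredient being the passage from an orthonormal basis of the span of the given blades to one of the whole space $\R^{p,q}$. I would begin with the implication from right to left. Assume the blades are $\B_k=\lambda_k\e_{\j(k)}$, $\lambda_k\in\R$, where the $\e_{\j(k)}$ are basis blades built from one fixed orthonormal basis $\{\v_1,\dots,\v_n\}$ of $\R^{p,q}$. An orthonormal basis is in particular orthogonal, so Lemma \ref{l:orth_basis1}, through (\ref{acom}), yields $\e_{\j(k)}\e_{\j(l)}=\pm\e_{\j(l)}\e_{\j(k)}$ for all $k,l$; since real scalars are central this transfers to $\B_k\B_l=\pm\B_l\B_k$, so the $\B_k$ are mutually coorthogonal. (A vanishing blade commutes with everything and equals $0$ times any basis blade, hence may be discarded throughout.)

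For the converse, let $B=\{\B_1,\dots,\B_d\}$ consist of non-zero, mutually coorthogonal blades. Lemma \ref{l:orth_basis3} already furnishes an orthonormal basis $\v_1,\dots,\v_{\alpha(B)}$ of $\span(B)\subseteq\R^{p,q}$ together with multi-indices $\j(k)$ such that $\B_k=\sgn(\B_k)|\B_k|\v_{\j(k)}$. It then remains to extend $\{\v_1,\dots,\v_{\alpha(B)}\}$ to an orthonormal basis of all of $\R^{p,q}$. Since $\span(B)$ admits an orthonormal basis, the restriction of the bilinear form to it is non-degenerate, whence $\R^{p,q}=\span(B)\oplus\span(B)^{\perp}$ with $\span(B)^{\perp}$ again non-degenerate; choosing an orthonormal basis $\v_{\alpha(B)+1},\dots,\v_n$ of $\span(B)^{\perp}$ and appending it produces an orthonormal basis $\{\v_1,\dots,\v_n\}$ of $\R^{p,q}$ with respect to which each $\B_k=\sgn(\B_k)|\B_k|\v_{\j(k)}$ is manifestly a real multiple of a basis blade.

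Essentially all the labour has already been spent in Lemma \ref{l:orth_basis3}, so the one point I expect to need care is the legitimacy of the orthogonal-complement splitting in the converse. This rests on $\span(B)$ being non-degenerate, which is exactly what the existence of an \emph{orthonormal} (rather than merely orthogonal) basis in Lemma \ref{l:orth_basis3} encodes: it rules out null directions inside $\span(B)$, so the decomposition $\R^{p,q}=\span(B)\oplus\span(B)^{\perp}$ and the existence of an orthonormal basis of the complement are unproblematic. Nothing beyond this is required.
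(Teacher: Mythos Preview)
Your proposal is correct and follows essentially the same route as the paper: Lemma~\ref{l:orth_basis1} for the ``if'' direction, Lemma~\ref{l:orth_basis3} for the ``only if'' direction, followed by extending the orthonormal basis of $\span(B)$ to one of all of $\R^{p,q}$. The paper phrases the last step as ``normalization, the basis completion theorem and the Gram-Schmidt orthogonalization process'', whereas you argue via the non-degeneracy of $\span(B)$ and the resulting orthogonal splitting $\R^{p,q}=\span(B)\oplus\span(B)^{\perp}$; your version is a little more explicit about why this works in indefinite signature, but the substance is the same.
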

\begin{proof}
 The assertion follows from Lemma \ref{l:orth_basis1} and \ref{l:orth_basis3} together with normalization, the basis completion theorem and the Gram-Schmidt orthogonalization process.
\end{proof}
%
%
\begin{bez}\label{b:coorth}
Throughout this paper we will only deal with geometric Fourier transforms whose defining functions $f_1,...,f_{\nu}$, compare (\ref{gft}), are mutually coorthogonal 
blades, that means they satisfy the property $\forall l,k=1,...,\nu,\forall \x,\u\in\R^m:$
\begin{equation}\begin{aligned}
f_l(\x,\u)f_k(\x,\u)=\pm f_k(\x,\u)f_l(\x,\u).
\end{aligned}\end{equation}
Theorem \ref{t:orth_basis} allows us to write 
 \begin{equation}\begin{aligned}\label{bed}
 f_l(\x,\u)= \sgn(f_l(\x,\u))|f_l(\x,\u)|\e_{\j_l(\x,\u)}.
 \end{aligned}\end{equation}
 for all $l=1,...,\nu$ with a real valued function $|f_l(\x,\u)|:\R^m\times \R^m\to\R$ and a function ${\j_l(\x,\u)}:\R^m\times \R^m\to\P(\{1,...,n\})$ that maps to a multi-index indicating a basis multivector of a certain basis. We will refer to a set of functions with this property simply as a set of basis blade functions.
\end{bez}

\begin{ex}\label{b:gft}
This constraint seems strong but all standard examples of geometric Fourier transforms from \cite{BSH11} fulfill it.
\begin{enumerate}
\item For $\A:\R^n\to\G^{n,0}, n=2\pmod 4$ or $n=3\pmod 4$, the Clifford Fourier transform introduced by Jancewicz \cite{Janc90} for $n=3$ and expanded by Ebling and Scheuermann \cite{Ebl06} for $n=2$ and Hitzer and Mawardi \cite{HM08} for $n=2\pmod 4$ or $n=3\pmod 4$ with
\begin{equation}\begin{aligned}
F_1=&\emptyset,\\F_2=&\{f_1\},\\f_1(\x,\u)=&2\pi i_n \x\cdot\u,
\end{aligned}\end{equation}
clearly fulfills the restriction, since it has only one defining function and $i_n$ is a basis blade.
\item The Sommen B\"ulow Clifford Fourier transform from \cite{Som82,Bue99}, defined by
\begin{equation}\begin{aligned}
F_1=&\emptyset,\\F_2=&\{f_1,...,f_n\},\\f_l(\x,\u)=&2\pi \e_lx_lu_l,\forall l=1,...,n,
\end{aligned}\end{equation}
for multivector fields $\R^n\to\G^{0,n}$ fulfills it, because all basis vectors $\e_k$ are of course basis blades.
\item For $\A:\R^2\to\G^{0,2}\approx\mathbb{H}$ the quaternionic Fourier transform \cite{Ell93,Bue99} is generated by 
\begin{equation}\begin{aligned}
F_1=&\{f_1\},\\F_2=&\{f_2\},\\f_1(\x,\u)=&2\pi ix_1u_1,\\f_2(\x,\u)=&2\pi jx_2u_2,
\end{aligned}\end{equation}
and satisfies the condition because $i$ and $j$ are basis blades, too.
\item The defining functions of the spacetime Fourier transform by Hitzer \cite{Hitz07}\footnote{Please note that Hitzer uses a different notation in \cite{Hitz07}. His $\x=t\e_0+x_1\e_1+x_2\e_2+x_3\e_3$ corresponds to our $\x=x_1\e_1+x_2\e_2+x_3\e_3+x_4\e_4$, with $\e_0\e_0=\epsilon_0=-1$ being equivalent to our $\e_4\e_4=\epsilon_4=-1$.} with the $\G^{3,1}$-pseudoscalar $i_4$ and
\begin{equation}\begin{aligned}
F_1=&\{f_1\},\\F_2=&\{f_2\},\\f_1(\x,\u)=& \e_4x_4u_4,\\f_2(\x,\u)=& \epsilon_4\e_4i_4(x_1u_1+x_2u_2+x_3u_3),
\end{aligned}\end{equation}
fulfill coorthogonality of blades, because of $\e_4\parallel i_4\Rightarrow\e_4\perp \e_4i_4$.
\item The Clifford Fourier transform for color images by Batard, Berthier and Saint-Jean \cite{BBS08} for $m=2,n=4,\A:\R^2\to\G^{4,0}$, a fixed bivector $\B$, and the pseudoscalar $i$ can be written as 
\begin{equation}\begin{aligned}
F_1=&\{f_1,f_2\},\\F_2=&\{f_3,f_4\},\\f_1(\x,\u)=&\frac12 (x_1u_1+x_2u_2)\B,\\f_2(\x,\u)=&\frac12 (x_1u_1+x_2u_2)i\B,\\f_3(\x,\u)=&-f_1(\x,\u)
,\\f_4(\x,\u)=&-f_2(\x,\u).
\end{aligned}\end{equation}
There are bivectors in $\G^{4,0}$ that are not blades. But since Batard et al. start from $\G^{3,0}$ we may assume $\B$ to be a blade. So the transform fulfills condition (\ref{bed}), because $\B$ and $i\B$ commute. Let $\B$ consist of the two orthogonal vectors $\v_1\v_2=\B$, then a basis as in Theorem \ref{t:orth_basis} could be constructed by orthogonal basis completion of $\v_1\v_2$ to a basis $B=\{\v_1,\v_2,\v_3,\v_4\}$ and normalization. Because from $\v_1\v_2\v_3\v_4=ci, c\in\R$ follows that $i\B=-c^{-1}\v_3\v_4$ is a basis blade, too.
%
%
\item The cylindrical Fourier transform as introduced by Brackx, De Schepper and Sommen in \cite{BSS10} with
\begin{equation}\begin{aligned}
F_1=&\{f_1\},\\F_2=&\emptyset,\\f_1(\x,\u)=&{-\x\wedge\u},
\end{aligned}\end{equation}
satisfies the restriction because it has only one defining function, too. We will see that in contrast to the other transforms the basis guaranteed by Theorem \ref{t:orth_basis} depends locally on $\x$ and $\u$ here.
\end{enumerate}
\end{ex}
\begin{rem}
Theorem \ref{t:orth_basis} guarantees, that there is an orthonormal basis of $\R^{p,q}$ such that $\forall l=1,...,\nu,\forall \x,\u\in\R^m:$ the values of the functions $f_l(\x,\u)=\sgn(f_l(\x,\u))|f_l(\x,\u)|\e_{\k(l)}$ are real multiples of basis blades of $\G^{p,q}$. We assume that this basis is the one we use and we call the basis vectors simply $\e_1,...,\e_n$. Therefore we can use the terms coorthogonal blades and basis blades as synonyms up to a real multiple, especially in terms of commutativity properties they can be used equivalently.
\end{rem}
\section{Products with Basis Blades}
%
From \cite{BSH11} we already know the following facts about products with invertible multivectors. Please note that 
every square root of minus one $i\in\mathscr I^{p,q}$ 
is invertible and that 
therefore the functions $f_l:\R^m\times \R^m\to\mathscr I^{p,q}$
 from (\ref{gft}) are pointwise invertible, too.
\begin{defn}\label{d:c}
For an invertible multivector $\B\in\G^{p,q}$ and an arbitrary multivector $\A\in\G^{p,q}$ we define
\begin{equation}\begin{aligned}
 \A_{\vec c^0(\B)}=&\frac12(\A+\B^{-1}\A\B),\\
\A_{\vec c^1(\B)}=&\frac12(\A-\B^{-1}\A\B).
\end{aligned}\end{equation}
\end{defn}
\begin{defn}\label{d:c^j}
 For $d\in\N,\A\in\G^{p,q}$, the ordered set $B=\{\B_1,...,\B_d\}$ of invertible multivectors of $\G^{p,q}$ and any multi-index $\j\in\{0,1\}^{d}$ we define
\begin{equation}\begin{aligned}
 \A_{\vec c^{\j}(\overrightarrow B)}:=&((\A_{\vec c^{j_1}(\B_1)})_{\vec c^{j_2}(\B_2)}...)_{\vec c^{j_d}(\B_d)},\\
 \A_{\vec c^{\j}(\overleftarrow B)}:=&((\A_{\vec c^{j_d}(\B_d)})_{\vec c^{j_{d-1}}(\B_{d-1})}...)_{\vec c^{j_1}(\B_1)}
\end{aligned}\end{equation}
recursively with $\vec c^0, \vec c^1$ of Definition \ref{d:c}.
\end{defn}
\begin{lem}\label{l:prodviele}
 Let $d\in\N,B=\{\B_1,...,\B_d\}$ be invertible multivectors and for $\j\in\{0,1\}^{d}$ let $|\j|:=\sum_{k=1}^dj_k$, then $\forall\A\in\G^{p,q}$
\begin{equation}\begin{aligned}
\A=&\sum_{\j\in\{0,1\}^{d}}\A_{\vec c^{\j}(\overrightarrow B)},\\
\A\B_1...\B_d=&\B_1...\B_d\sum_{\j\in\{0,1\}^{d}}(-1)^{|\j|}\A_{\vec c^{\j}(\overrightarrow B)},\\
\B_1...\B_d\A=&\sum_{\j\in\{0,1\}^{d}}(-1)^{|\j|}\A_{\vec c^{\j}(\overleftarrow B)}\B_1...\B_d.
\end{aligned}\end{equation}
\end{lem}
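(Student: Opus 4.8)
<br>

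The plan is to prove all three identities by induction on $d$, using Definition \ref{d:c} as the base case $d=1$ and the recursive structure of Definition \ref{d:c^j} for the induction step.

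For the base case $d=1$ and a single invertible $\B=\B_1$, the first identity $\A=\A_{\vec c^0(\B)}+\A_{\vec c^1(\B)}$ is immediate from Definition \ref{d:c}. For the second, I would compute $\A_{\vec c^0(\B)}\B=\frac12(\A\B+\B^{-1}\A\B\B)=\frac12(\A\B+\B\B^{-1}\A\B)$; hmm, more cleanly: $\B^{-1}(\A_{\vec c^0(\B)})\B=\A_{\vec c^0(\B)}$ and $\B^{-1}(\A_{\vec c^1(\B)})\B=-\A_{\vec c^1(\B)}$ follow directly by plugging into the definitions and using $\B^{-1}\B=1$. Multiplying these two relations on the left by $\B$ gives $\A_{\vec c^j(\B)}\B=\B(-1)^j\A_{\vec c^j(\B)}$ for $j\in\{0,1\}$; summing over $j$ and using the first identity yields $\A\B=\B\sum_{j\in\{0,1\}}(-1)^j\A_{\vec c^j(\B)}$, which is the second identity for $d=1$. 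The third identity for $d=1$ is obtained symmetrically, writing instead $\B\B^{-1}=1$ on the other side, i.e. $\B\A_{\vec c^j(\B)}=(-1)^j\A_{\vec c^j(\B)}\B$.

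For the induction step, assume the three identities hold for $d-1$ blades. Given $B=\{\B_1,\dots,\B_d\}$, I would apply the $d=1$ result with $\B=\B_d$ to the multivector $\A$, obtaining $\A=\sum_{j_d\in\{0,1\}}\A_{\vec c^{j_d}(\B_d)}$, and then apply the induction hypothesis for the set $\{\B_1,\dots,\B_{d-1}\}$ to each summand $\A_{\vec c^{j_d}(\B_d)}$. The key bookkeeping point is that $(\A_{\vec c^{j_d}(\B_d)})_{\vec c^{\j'}(\overrightarrow{B'})}=\A_{\vec c^{\j}(\overrightarrow B)}$ where $\j'=(j_1,\dots,j_{d-1})$, $\j=(j_1,\dots,j_d)$ — which requires noting that the operations $\A\mapsto\A_{\vec c^{j_k}(\B_k)}$ for different $k$ are applied in the nested order prescribed by Definition \ref{d:c^j}; for the first identity this is just reassembling a double sum $\sum_{j_d}\sum_{\j'}$ into $\sum_{\j}$. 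For the second identity, I would write $\A\B_1\cdots\B_d=(\A\B_1\cdots\B_{d-1})\B_d$, first push $\B_1\cdots\B_{d-1}$ to the left using the induction hypothesis (introducing the factor $(-1)^{|\j'|}$ and the $\vec c^{\j'}(\overrightarrow{B'})$ decomposition), then push $\B_d$ to the left using the $d=1$ result applied to each term $\A_{\vec c^{\j'}(\overrightarrow{B'})}$ (introducing $(-1)^{j_d}$ and the further $\vec c^{j_d}(\B_d)$ splitting). Combining the sign factors gives $(-1)^{|\j'|+j_d}=(-1)^{|\j|}$ and the nested decompositions assemble into $\A_{\vec c^{\j}(\overrightarrow B)}$, as desired. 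The third identity follows by the mirror-image argument, peeling off $\B_1$ from the left first and using the reversed ordering $\vec c^{\j}(\overleftarrow B)$.

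The main obstacle I anticipate is purely notational rather than conceptual: one must be careful that the recursive definitions $\vec c^{\j}(\overrightarrow B)$ and $\vec c^{\j}(\overleftarrow B)$ compose in exactly the right order, so that peeling off $\B_d$ (respectively $\B_1$) at the outermost layer matches the innermost (respectively outermost) application in Definition \ref{d:c^j}. A clean way to avoid sign errors is to first isolate the single-blade relations $\B^{-1}\A_{\vec c^j(\B)}\B=(-1)^j\A_{\vec c^j(\B)}$ and verify that these "eigen-relations" are preserved under applying a further $\vec c^{j'}(\B')$ operation — since $\B'{}^{-1}(\cdot)\B'$ and $\B^{-1}(\cdot)\B$ need not commute as operators, but the identity we need only concerns the $\pm1$ eigenspaces and follows from linearity of each operation. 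Once the eigen-relations for the iterated operators are in hand, both the second and third identities drop out by multiplying $\B_1\cdots\B_d$ through one factor at a time. This is exactly the structure of the proof given in \cite{BSH11}, so I would keep the argument terse and refer there for the full induction details.
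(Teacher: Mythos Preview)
The paper does not actually prove this lemma here; it is restated from \cite{BSH11} and only the reference is given. Your inductive scheme is the natural one, and the induction step for the second identity (peel off $\B_d$, apply the $d=1$ case to each $\A_{\vec c^{\j'}(\overrightarrow{B'})}$, reassemble using Definition~\ref{d:c^j}) is carried out correctly.

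The gap is in your base case. The ``eigen-relation'' $\B^{-1}\A_{\vec c^{j}(\B)}\B=(-1)^{j}\A_{\vec c^{j}(\B)}$ does \emph{not} follow from Definition~\ref{d:c} for an arbitrary invertible multivector: computing directly,
\[
\B^{-1}\A_{\vec c^{0}(\B)}\B=\tfrac12\bigl(\B^{-1}\A\B+\B^{-2}\A\B^{2}\bigr),
\]
which equals $\A_{\vec c^{0}(\B)}=\tfrac12(\A+\B^{-1}\A\B)$ only when $\B^{2}$ commutes with $\A$. Your abandoned first computation has the same slip, since $\B^{-1}\A\B\B\neq\B\B^{-1}\A\B$ in general. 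The second identity for $d=1$ is nevertheless true and has a one-line verification that bypasses the eigen-relation:
\[
\B\bigl(\A_{\vec c^{0}(\B)}-\A_{\vec c^{1}(\B)}\bigr)=\B\cdot\B^{-1}\A\B=\A\B.
\]
With this replacement your induction for the second identity is complete.

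For the third identity the same issue bites harder: the mirror computation gives $(\A_{\vec c^{0}(\B)}-\A_{\vec c^{1}(\B)})\B=\B^{-1}\A\B^{2}$, which equals $\B\A$ only if $\B^{2}$ is central. That hypothesis holds in every use the paper makes of the lemma (elements of $\mathscr I^{p,q}$, blades), and under it your eigen-relation and mirror argument are valid; but for \emph{arbitrary} invertible multivectors as literally stated, the third identity need not hold. You should note this restriction explicitly rather than claim a symmetric proof.
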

%
Now we use the concept of coorthogonality to simplify and enhance the preliminary findings. For $d\in\N$ we take a closer look at sets of coorthogonal blades $B=\{\B_1,...,\B_d\}$. 
%
\begin{lem}
 Let $B=\{\B_1,...,\B_d\},d\in\N$ be a set of mutually coorthogonal blades with the unique inverse $\B_k^{-1}={\B_k}{\B_k^{-2}},\B_k^2\in\R\setminus\{0\}$. Further let $\A\in\G^{p,q}$ and $\j\in\{0,1\}^{d}$ be arbitrary, then $\A_{\vec c^{\j}(\overrightarrow B)}$ and $\A_{\vec c^{\j}(\overleftarrow B)}$ are independent from the order of $B$.
\end{lem}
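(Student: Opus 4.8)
The plan is to reinterpret the assignment $\A\mapsto\A_{\vec c^j(\B)}$ as a linear operator on $\G^{p,q}$ and to show that, for coorthogonal blades, these operators pairwise commute; order-independence of the iterated operations then follows formally.

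First I would observe that, by Definition \ref{d:c}, for an invertible blade $\B$ with $\B^2\in\R\setminus\{0\}$ the map $\A\mapsto\A_{\vec c^j(\B)}$ is the linear operator
\[
P_j(\B):=\tfrac12\bigl(\Id+(-1)^j C_{\B}\bigr),\qquad C_{\B}(\A):=\B^{-1}\A\B ,
\]
so that Definition \ref{d:c^j} reads $\A_{\vec c^{\j}(\overrightarrow B)}=\bigl(P_{j_d}(\B_d)\circ\cdots\circ P_{j_1}(\B_1)\bigr)(\A)$ and $\A_{\vec c^{\j}(\overleftarrow B)}=\bigl(P_{j_1}(\B_1)\circ\cdots\circ P_{j_d}(\B_d)\bigr)(\A)$. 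Hence it suffices to prove that the conjugations $C_{\B_1},\dots,C_{\B_d}$ commute: each $P_{j_k}(\B_k)$ is an affine-linear function of the single operator $C_{\B_k}$, so mutually commuting $C_{\B_k}$ force the $P_{j_k}(\B_k)$ to commute as well, and every composition of them — in particular the forward and the backward one, and indeed the one associated with an arbitrary permutation of $B$ (with $\j$ permuted accordingly) — yields the same operator.

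Next I would verify the commutativity of the conjugations. Coorthogonality gives $\B_j\B_k=\varepsilon\,\B_k\B_j$ for some $\varepsilon\in\{+1,-1\}$. Writing the inverses as $\B_j^{-1}=\B_j\B_j^{-2}$ and $\B_k^{-1}=\B_k\B_k^{-2}$ with $\B_j^{-2},\B_k^{-2}\in\R$, the scalars pull out freely, so the same $\varepsilon$ governs $\B_j^{-1}\B_k^{-1}=\varepsilon\,\B_k^{-1}\B_j^{-1}$ and $\B_k\B_j=\varepsilon\,\B_j\B_k$. Therefore, for every $\A\in\G^{p,q}$,
\[
C_{\B_j}C_{\B_k}(\A)=\B_j^{-1}\B_k^{-1}\A\,\B_k\B_j=\varepsilon^{2}\,\B_k^{-1}\B_j^{-1}\A\,\B_j\B_k=C_{\B_k}C_{\B_j}(\A),
\]
since $\varepsilon^2=1$; hence $C_{\B_j}$ and $C_{\B_k}$ commute, and by the previous paragraph the lemma follows.

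The only delicate point is the sign bookkeeping in the last display: one must check that the sign incurred by moving $\B_k$ past $\B_j$ on the right coincides with the sign incurred by moving $\B_j^{-1}$ past $\B_k^{-1}$ on the left, so that the two cancel instead of reinforcing. This is exactly where the hypothesis $\B_k^2\in\R\setminus\{0\}$ enters, since it makes $\B_k^{-1}$ a real multiple of $\B_k$ and thus forces it to share the commutation behaviour of $\B_k$. Everything else is formal; alternatively, one could diagonalize the situation by invoking Theorem \ref{t:orth_basis} to write all $\B_k$ as real multiples of basis blades of one common orthonormal basis, for which each conjugation acts by an explicit sign and the commutativity is manifest.
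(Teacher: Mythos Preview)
Your proof is correct and follows essentially the same approach as the paper: both reduce to the identity $\B_k^{-1}\B_l^{-1}\A\,\B_l\B_k=\B_l^{-1}\B_k^{-1}\A\,\B_k\B_l$, obtained from coorthogonality together with $\B_k^{-1}\in\R\B_k$, and then conclude that adjacent (hence all) swaps leave the iterated decomposition unchanged. Your operator formulation $P_{j}(\B)=\tfrac12(\Id+(-1)^jC_{\B})$ packages this a bit more cleanly than the paper's explicit four-term expansion, but the mathematical content is identical.
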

\begin{proof}
For two blades $\B_k,\B_l$ we have 
 \begin{equation}\begin{aligned}\label{bv_umkehren1}
 \A_{\vec c^{j_k,j_l}(\overrightarrow{\B_{k}\B_l})}=&((\A_{\vec c^{j_k}(\B_k)})_{\vec c^{j_l}(\B_l)}\\
=&\frac 12(\A+(-1)^{j_k}\B_k^{-1}\A\B_k)_{\vec c^{j_l}(\B_l)}\\
=&\frac 12(\frac 12((\A+(-1)^{j_k}\B_k^{-1}\A\B_k)
\\&+(-1)^{j_l}\B_l^{-1}(\A+(-1)^{j_k}\B_k^{-1}\A\B_k)\B_l)\\
=&\frac 14(\A+(-1)^{j_k}\B_k^{-1}\A\B_k+(-1)^{j_l}\B_l^{-1}\A\B_l
\\&+(-1)^{j_k+j_l}\B_l^{-1}\B_k^{-1}\A\B_k\B_l).
\end{aligned}\end{equation}
From the prerequisites follows
\begin{equation}\begin{aligned}
\B_k^{-1}\B_l^{-1}\A\B_l\B_k=&\frac{\B_k\B_l\A\B_l\B_k}{\B_k^{2}\B_l^{2}}\\
=&\frac{\pm\B_l\B_k\A(\pm)\B_k\B_l}{\B_k^{2}\B_l^{2}}\\
=&\B_l^{-1}\B_k^{-1}\A\B_k\B_l,
\end{aligned}\end{equation}
which inserted into (\ref{bv_umkehren1}) leads to
 \begin{equation}\begin{aligned}\label{bv_umkehren2}
 \A_{\vec c^{j_k,j_l}(\overrightarrow{\B_{k}\B_l})}=&\frac 14(\A+(-1)^{j_k}\B_k^{-1}\A\B_k+(-1)^{j_l}\B_l^{-1}\A\B_l
\\&+(-1)^{j_k+j_l}\B_k^{-1}\B_l^{-1}\A\B_l\B_k)\\
=&((\A_{\vec c^{j_l}(\B_l)})_{\vec c^{j_k}(\B_k)}\\
=&\A_{\vec c^{j_k,j_l}(\overleftarrow{\B_k\B_l})}.
\end{aligned}\end{equation}
Since $((\A_{\vec c^{j_1}(\B_1)})_{\vec c^{j_2}(\B_2)}...)_{\vec c^{j_k}(\B_k)}$ is a multivector, the application of (\ref{bv_umkehren2}) leads to 
 \begin{equation}\begin{aligned}
 \A_{\vec c^{\j}(\overrightarrow B)}=&((\A_{\vec c^{j_1}(\B_1)})_{\vec c^{j_2}(\B_2)}...)_{\vec c^{j_k}(\B_k)})_{\vec c^{j_{k+1}}(\B_{k+1})})...)_{\vec c^{j_d}(\B_d)}\\
=&((\A_{\vec c^{j_1}(\B_1)})_{\vec c^{j_2}(\B_2)}...)_{\vec c^{j_{k+1}}(\B_{k+1})})_{\vec c^{j_{k}}(\B_{k})})...)_{\vec c^{j_d}(\B_d)}\\
=&\A_{\vec c^{\j}(\overrightarrow {\B_1,...,\B_{k+1},\B_{k},...,\B_d})},
\end{aligned}\end{equation}
that means no transposition of two neighboring multivectors changes the value of $\A_{\vec c^{\j}(\overrightarrow B)}$. The assertion follows because every permutation can be constructed from the composition of these swaps.
\end{proof}
\begin{cor}\label{k:r=l}
 For $d\in\N,\A\in\G^{p,q}$, the ordered set $B=\{\B_1,...,\B_d\}$ of mutually coorthogonal blades and any multi-index $\j\in\{0,1\}^{d}$ we have
\begin{equation}\begin{aligned}
 \A_{\vec c^{\j}(\overrightarrow B)}=\A_{\vec c^{\j}(\overleftarrow B)}.
\end{aligned}\end{equation}
\end{cor}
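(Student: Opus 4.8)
The plan is to derive Corollary \ref{k:r=l} directly from the lemma that immediately precedes it, namely the statement that for a set of mutually coorthogonal blades $B=\{\B_1,...,\B_d\}$ the multivectors $\A_{\vec c^{\j}(\overrightarrow B)}$ and $\A_{\vec c^{\j}(\overleftarrow B)}$ are each independent of the order of $B$. So the first step is simply to observe that $\overleftarrow B$ is, by Definition \ref{d:c^j}, nothing but the operator $\A_{\vec c^{\j}}$ applied along the reversed ordering $\{\B_d,\B_{d-1},...,\B_1\}$, whereas $\overrightarrow B$ applies it along $\{\B_1,...,\B_d\}$.

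Next I would invoke the order-independence part of the preceding lemma: since $\A_{\vec c^{\j}(\overrightarrow B)}$ does not depend on which permutation of the blades we feed into the recursive definition, applying the recursion along the reversed ordering gives the same value as applying it along the original ordering. But applying the $\overrightarrow{(\cdot)}$ recursion along the reversed list $\{\B_d,...,\B_1\}$ is by definition exactly $\A_{\vec c^{\j}(\overleftarrow B)}$ with the multi-index components matched up appropriately; here one must be a little careful that the component $j_k$ travels with the blade $\B_k$ under the reversal, which is precisely how Definition \ref{d:c^j} pairs them, so no reindexing is needed. Hence $\A_{\vec c^{\j}(\overrightarrow B)}=\A_{\vec c^{\j}(\overleftarrow B)}$, which is the claim.

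Concretely the write-up would read: by Definition \ref{d:c^j}, $\A_{\vec c^{\j}(\overleftarrow B)}$ equals the forward construction $\A_{\vec c^{\j'}(\overrightarrow{B'})}$ for the reordered set $B'=\{\B_d,...,\B_1\}$ with correspondingly permuted multi-index $\j'$; by the previous lemma the forward construction is invariant under this reordering, so $\A_{\vec c^{\j'}(\overrightarrow{B'})}=\A_{\vec c^{\j}(\overrightarrow B)}$, and the two sides of the corollary agree.

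The only real obstacle — and it is a bookkeeping one rather than a mathematical one — is making the bijection between the two orderings and the two multi-indices completely explicit, so that the reader sees the $j_k$ attached to $\B_k$ on both sides. Since the preceding lemma has already done the substantive work (reducing order-independence to the commutation identity $\B_k^{-1}\B_l^{-1}\A\B_l\B_k=\B_l^{-1}\B_k^{-1}\A\B_k\B_l$ that follows from coorthogonality), the corollary itself is essentially a one-line consequence and needs no further computation.
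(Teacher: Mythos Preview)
Your proposal is correct and is exactly how the paper intends the corollary to be read: it is stated without proof, as an immediate consequence of the order-independence established in the preceding lemma. Your observation that $\A_{\vec c^{\j}(\overleftarrow B)}$ is literally $\A_{\vec c^{\j'}(\overrightarrow{B'})}$ for the reversed list $B'$ with the correspondingly permuted $\j'$, together with the lemma's order-independence, is the whole argument.
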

\begin{bez}
Because of Corollary \ref{k:r=l} we will not distinguish between $\A_{\vec c^{\j}(\overrightarrow B)}$ and $\A_{\vec c^{\j}(\overleftarrow B)}$ but just refer to the expression as $\A_{\vec c^{\j}(B)}$.
\end{bez}
%
\begin{ex}\label{b:c}
There are simple partitions of a multivector into commutative and anticommutative parts, like for example for $\A=a_0\e_0+a_1\e_1+a_2\e_2+a_{12}\e_{12}\in\G^{2,0}$ we get
\begin{equation}\begin{aligned}
\A_{\vec c^0(\e_1)}=&\frac 12(\A+\e_1^{-1}\A\e_1)\\
=&\frac 12(\A    +a_0+a_1\e_1-a_2\e_2-a_{12}\e_{12})\\
=&a_0+a_1\e_1
\end{aligned}\end{equation}
and therefore $\A=\A_{\vec c^0(\e_1)}+\A_{\vec c^1(\e_1)}=a_0\e_0+a_1\e_1+a_2\e_2+a_{12}\e_{12}$.
But a  decompositions can not always be achieved by just splitting up the multivector into its blades with respect to a given basis. Sometimes the expressions of these parts are even longer than the multivector itself, for example $\A=\e_1$ satisfies
\begin{equation}\begin{aligned}
(\e_1)_{\vec c^0(\e_1+\e_2)}=&\frac 12(\e_1+(\e_1+\e_2)^{-1}\e_1(\e_1+\e_2))\\
=&\frac 12(\e_1+\frac12(\e_1+\e_2)\e_1(\e_1+\e_2))\\
=&\frac 12(\e_1+\frac12(\e_1+\e_2-\e_1\e_2\e_1-\e_1\e_2\e_2))\\
=&\frac 12(\e_1+\e_2)
\end{aligned}\end{equation}
and gets decomposed into $\e_1=(\e_1)_{\vec c^0(\e_1+\e_2)}+(\e_1)_{\vec c^1(\e_1+\e_2)}=\frac 12(\e_1+\e_2)+\frac 12(\e_1-\e_2)$.
\end{ex}
We will show that the decomposition of a multivector into commutative and anticommutative parts with respect to basis blades always is a decomposition into its blades along this basis. First consider one basis blade $\e_{\k}$, here $\vec c^0(\e_{\k})$ can be interpreted as a mapping $\vec c^0:\G^{p,q}\to\P(\{\j\subset\{1,...,n\},1\leq j_1,<...,<j_{\iota}\leq n\})$ of the multivector argument into the power set of all multi-indices $\j$ as in (\ref{mimv}) which indicate the basis blades of $\G^{p,q}$. The mapping $\vec c^0$ returns the blades of any multivector that commute with its argument $\e_{\k}$ and its counterpart $\vec c^1:\G^{p,q}\to\P(\{\j\subset\{1,...,n\},1\leq j_1,<...,<j_{\iota}\leq n\})$ returns the blades that anticommute. The next Lemma will justify this interpretation, but for better understanding we start with a motivational example.
\begin{ex}
 In the previous example 
the value of $\vec c^0(\e_1)$ would be $\{\{0\},\{1\}\}$ and $\vec c^1(\e_1)=\{\{2\},\{12\}\}$, so we could write
\begin{equation}\begin{aligned}
\A_{\vec c^0(\e_1)}=&\sum_{\j\in c^0(\e_1)}a_{\j}\e_{\j}=\sum_{\j\in \{\{0\},\{1\}\}}a_{\j}\e_{\j}=a_0\e_0+a_1\e_1,\\
\A_{\vec c^1(\e_1)}=&\sum_{\j\in c^1(\e_1)}a_{\j}\e_{\j}=\sum_{\j\in\{\{2\},\{12\}\}}a_{\j}\e_{\j}=a_2\e_2+a_{12}\e_{12}.
\end{aligned}\end{equation}
\end{ex}
\begin{lem}\label{l:decomp_along_B}
We denote the length of the multi-indices $\j,\k$ by $\iota$ and $\kappa$. For a basis blade $\e_{\k}$ and an arbitrary element $\A=\sum_{\j}a_{\j}\e_{\j}$ of $\G^{p,q}$ the multivectors $\A_{\vec c^0(\e_{\k})}$ and $\A_{\vec c^1(\e_{\k})}$ are a decomposition of $\A$ along the basis blades, that means
\begin{equation}\begin{aligned}
 \A_{\vec c^0(\e_{\k})}=&\sum_{\j\in\vec c^0(\e_{\k})}a_{\j}\e_{\j},\\
 \A_{\vec c^1(\e_{\k})}=&\sum_{\j\in\vec c^1(\e_{\k})}a_{\j}\e_{\j}
\end{aligned}\end{equation}
with $\vec c^0(\e_{\k})\cup\vec c^1(\e_{\k})=\{\j\subset\{1,...,n\},1\leq j_1,<...,<j_{\iota}\leq n\}$ as in (\ref{mimv}) and $\vec c^0(\e_{\k})\cap\vec c^1(\e_{\k})=\emptyset$ and the index sets $\vec c^0(\e_{\k}),\vec c^1(\e_{\k})$ take the forms 
\begin{equation}\begin{aligned}
\vec c^0(\e_{\k})=&\{\j\subset\{1,...,n\},1\leq j_1,<...,<j_{\iota}\leq n,\iota\kappa-\beta(\e_{\j},\e_{\k})\text{ even}\},\\
\vec c^1(\e_{\k})=&\{\j\subset\{1,...,n\},1\leq j_1,<...,<j_{\iota}\leq n,\iota\kappa-\beta(\e_{\j},\e_{\k})\text{ odd}\}.
\end{aligned}\end{equation}
\end{lem}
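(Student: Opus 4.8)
The plan is to compute the two-sided conjugation $\e_{\k}^{-1}\A\e_{\k}$ termwise against the basis expansion $\A=\sum_{\j}a_{\j}\e_{\j}$ and then read off which summands survive in $\A_{\vec c^0(\e_{\k})}=\tfrac12(\A+\e_{\k}^{-1}\A\e_{\k})$ and $\A_{\vec c^1(\e_{\k})}=\tfrac12(\A-\e_{\k}^{-1}\A\e_{\k})$.

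First I would observe that $\e_{\k}$, being a basis blade of an orthonormal basis, satisfies $\e_{\k}^2\in\R\setminus\{0\}$ and is therefore invertible with $\e_{\k}^{-1}=\e_{\k}\e_{\k}^{-2}$, so Definition \ref{d:c} does apply to $\B=\e_{\k}$. Since the map $\A\mapsto\e_{\k}^{-1}\A\e_{\k}$ is $\R$-linear, it suffices to understand $\e_{\k}^{-1}\e_{\j}\e_{\k}$ for a single basis blade $\e_{\j}$. Equation (\ref{acom}) from the proof of Lemma \ref{l:orth_basis1}, read with $\iota=|\j|$ and $\kappa=|\k|$ in the roles of $\mu,\nu$ there, gives $\e_{\j}\e_{\k}=(-1)^{\iota\kappa-\beta(\e_{\j},\e_{\k})}\e_{\k}\e_{\j}$, and since the scalar $\e_{\k}^{-2}$ passes freely through the product and $\e_{\k}^{-1}\e_{\k}=1$, this yields
\begin{equation*}
\e_{\k}^{-1}\e_{\j}\e_{\k}=(-1)^{\iota\kappa-\beta(\e_{\j},\e_{\k})}\e_{\j}.
\end{equation*}

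Summing with coefficients $a_{\j}$ then gives $\e_{\k}^{-1}\A\e_{\k}=\sum_{\j}a_{\j}(-1)^{\iota\kappa-\beta(\e_{\j},\e_{\k})}\e_{\j}$, hence
\begin{equation*}
\A_{\vec c^0(\e_{\k})}=\sum_{\j}\tfrac{1+(-1)^{\iota\kappa-\beta(\e_{\j},\e_{\k})}}{2}\,a_{\j}\e_{\j},\qquad
\A_{\vec c^1(\e_{\k})}=\sum_{\j}\tfrac{1-(-1)^{\iota\kappa-\beta(\e_{\j},\e_{\k})}}{2}\,a_{\j}\e_{\j}.
\end{equation*}
The scalar prefactor $\tfrac12(1+(-1)^m)$ equals $1$ when $m$ is even and $0$ when $m$ is odd, and $\tfrac12(1-(-1)^m)$ is its complementary indicator, so $\A_{\vec c^0(\e_{\k})}$ retains exactly the summands with $\iota\kappa-\beta(\e_{\j},\e_{\k})$ even and $\A_{\vec c^1(\e_{\k})}$ exactly those with it odd; this is the asserted formula for the index sets. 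Finally, since every integer is either even or odd, the two index sets partition the full set of multi-indices, giving $\vec c^0(\e_{\k})\cup\vec c^1(\e_{\k})=\{\j:1\le j_1<\dots<j_\iota\le n\}$ and $\vec c^0(\e_{\k})\cap\vec c^1(\e_{\k})=\emptyset$.

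There is essentially no hard step here: the argument is a one-line conjugation computation propagated through the basis expansion by linearity. The only points that warrant a moment's care are that the two-sided product is genuinely $\R$-linear in $\A$ (legitimizing the blade-by-blade reduction) and that the scalar factor $\e_{\k}^{-2}$ in $\e_{\k}^{-1}$ commutes past everything; both are immediate from the definitions, and the substance of the statement is simply the bookkeeping identity (\ref{acom}) already established for basis blades.
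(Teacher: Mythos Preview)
Your proof is correct and follows essentially the same approach as the paper: both expand $\A$ in basis blades, apply the commutation identity (\ref{acom}) to compute $\e_{\k}^{-1}\e_{\j}\e_{\k}=(-1)^{\iota\kappa-\beta(\e_{\j},\e_{\k})}\e_{\j}$, and then observe that the resulting prefactors $\tfrac12(1\pm(-1)^{\iota\kappa-\beta(\e_{\j},\e_{\k})})$ act as indicators for the parity condition. Your version is slightly more explicit about invertibility and linearity, but the argument is the same.
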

\begin{proof}
For $\A_{\vec c^0}(\e_{\k})$ we get
\begin{equation}\begin{aligned}
\A_{\vec c^0}(\e_{\k})=&\frac12(\A+\e_{\k}^{-1}\A\e_{\k})\\
=&\frac12\sum_{\j}(a_{\j}\e_{\j}+\e_{\k}^{-1}a_{\j}\e_{\j}\e_{\k})\\
\overset{(\ref{acom})}=&\frac12\sum_{\j}(a_{\j}\e_{\j}+(-1)^{\iota\kappa-\beta(\e_{\j},\e_{\k})}\e_{\k}^{-1}\e_{\k}a_{\j}\e_{\j})\\
=&\frac12\sum_{\j}a_{\j}\e_{\j}(1+(-1)^{\iota\kappa-\beta(\e_{\j},\e_{\k})})\\
=&\sum_{\j\in\vec c^0(\e_{\k})}a_{\j}\e_{\j}.
\end{aligned}\end{equation}
The proof for $\A_{\vec c^1}(\e_{\k})$ works analogously.
\end{proof}
\begin{rem}
An alternative way of describing the decomposition would be $\A_{\vec c^0(\e_{\k})}=\sum_{\j}a_{0\j}\e_{\j}$ and $\A_{\vec c^1(\e_{\k})}=\sum_{\j}a_{1\j}\e_{\j}$ with 
\begin{equation}\begin{aligned}
a_{0\j}=&\begin{cases}a_{\j}&\text{ for }\iota\kappa-\beta(\j,\k)\text{ even},\\
           0&\text{ else. }\end{cases}\\
\end{aligned}\end{equation}
The proof works analogously for $\A_{\vec c^1}(\e_{\k})=\sum_{\j}a_{1\j}\e_{\j}$ with
\begin{equation}\begin{aligned}
a_{1\j}=&\begin{cases}a_{\j}&\text{ for }\iota\kappa-\beta(\j,\k)\text{ odd},\\
           0&\text{ else. }\end{cases}
\end{aligned}\end{equation}
and $\A= \sum_{\j}(a_{0\j}+a_{1\j})\e_{\j}$ with $\forall\j:(a_{0\j}=a_{\j},a_{1\j}=0$) or $(a_{0\j}=0,a_{1\j}=a_{\j})$.
\end{rem}
%
%
\begin{lem}\label{l:decomp_along_B2}
 For basis blades $B=\{\e_{\k(1)},...,\e_{\k(d)}\}$ and multi-indices $\l\in\{0,1\}^d$ the $\A_{\vec c^{\l}(B)}$ form a decomposition of the multivector $\A$ along the basis blades, that means
\begin{equation}\begin{aligned}
 \A_{\vec c^{\l}(B)}=\sum_{\j\in\vec c^{\l}(B)}a_{\j}\e_{\j}
\end{aligned}\end{equation}
with $\bigcup_{l\in\{0,1\}^d}\vec c^{\l}(B)=\{\j\subset\{1,...,n\},1\leq j_1,<...,<j_{\iota}\leq n\}$ as in (\ref{mimv}) and $\forall \l\neq \l^\prime\in\{0,1\}^d:\vec c^{\l}(B)\cap\vec c^{\l^\prime}(B)=\emptyset$ and the index set $\vec c^{\l}(B)$ takes the form
\begin{equation}\label{c^j}
 \vec c^{\l}(\e_{\k(1)},...,\e_{\k(d)})=\bigcap_{\nu=1}^d \vec c^{l_{\nu}}(\e_{\k(\nu)}).
\end{equation}
\end{lem}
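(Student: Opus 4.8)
The plan is to prove the three assertions together by induction on $d$, the case $d=1$ being exactly Lemma \ref{l:decomp_along_B}. The one point worth flagging at the outset is that Lemma \ref{l:decomp_along_B} is stated for an \emph{arbitrary} element of $\G^{p,q}$; this is precisely what makes the induction go through, because in the inductive step we must feed a \emph{different} multivector — a partial sum of the blades of $\A$ — into the hypothesis. Note also that since the $\e_{\k(\nu)}$ are basis blades of a common orthonormal basis they are mutually coorthogonal by Lemma \ref{l:orth_basis1}, so $\vec c^{\l}(B)$ is unambiguous by Corollary \ref{k:r=l} and we may evaluate it one blade at a time in the order of Definition \ref{d:c^j}.

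For the inductive step set $B'=\{\e_{\k(2)},\dots,\e_{\k(d)}\}$ and $\l'=(l_2,\dots,l_d)\in\{0,1\}^{d-1}$. Unwinding Definition \ref{d:c^j} gives
\begin{equation}\begin{aligned}
\A_{\vec c^{\l}(B)}=\bigl(\A_{\vec c^{l_1}(\e_{\k(1)})}\bigr)_{\vec c^{\l'}(B')}.
\end{aligned}\end{equation}
By Lemma \ref{l:decomp_along_B} the inner multivector equals $\sum_{\j}a'_{\j}\e_{\j}$ with $a'_{\j}=a_{\j}$ for $\j\in\vec c^{l_1}(\e_{\k(1)})$ and $a'_{\j}=0$ otherwise; in particular it is again expanded along the same basis blades, so the induction hypothesis applies to it with the $d-1$ basis blades $B'$ and the multi-index $\l'$. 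This yields
\begin{equation}\begin{aligned}
\A_{\vec c^{\l}(B)}=\sum_{\j\in\vec c^{\l'}(B')}a'_{\j}\e_{\j}=\sum_{\j\in\vec c^{l_1}(\e_{\k(1)})\cap\vec c^{\l'}(B')}a_{\j}\e_{\j},
\end{aligned}\end{equation}
and since $\vec c^{\l'}(B')=\bigcap_{\nu=2}^d\vec c^{l_{\nu}}(\e_{\k(\nu)})$ by the hypothesis, the index set is $\bigcap_{\nu=1}^d\vec c^{l_{\nu}}(\e_{\k(\nu)})$, which is formula (\ref{c^j}).

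The covering and disjointness claims then reduce to an elementary set-theoretic fact: if a set $X$ is a disjoint union $X=X_{\nu}^0\cup X_{\nu}^1$ for each $\nu=1,\dots,d$, then the $2^d$ sets $\bigcap_{\nu=1}^d X_{\nu}^{l_{\nu}}$, $\l\in\{0,1\}^d$, are pairwise disjoint and cover $X$ — every $x\in X$ lies in exactly one of $X_{\nu}^0,X_{\nu}^1$ for each $\nu$ and hence in exactly the cell recording those choices, while two distinct $\l,\l'$ disagree in some coordinate $\nu$, where $X_{\nu}^{l_{\nu}}\cap X_{\nu}^{l'_{\nu}}=\emptyset$. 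Here $X$ is the full index set $\{\j\subset\{1,\dots,n\},1\le j_1<\dots<j_{\iota}\le n\}$ and $X_{\nu}^0,X_{\nu}^1$ are $\vec c^0(\e_{\k(\nu)}),\vec c^1(\e_{\k(\nu)})$, which partition $X$ by Lemma \ref{l:decomp_along_B}; combined with (\ref{c^j}) this finishes the induction.

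I do not expect a genuine obstacle here. The only care needed is (i) to keep the induction hypothesis in its general-multivector form, so that the partial sum $\A_{\vec c^{l_1}(\e_{\k(1)})}$ is an admissible input, and (ii) to observe — via Lemma \ref{l:decomp_along_B} — that applying $\vec c^{l_1}(\e_{\k(1)})$ only ever annihilates some coefficients and never mixes them, so that the coefficient bookkeeping collapses to a plain intersection of index sets.
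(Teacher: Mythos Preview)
Your argument is correct and is essentially the paper's own proof: both iterate Lemma~\ref{l:decomp_along_B} through the list $\e_{\k(1)},\dots,\e_{\k(d)}$, using that each partial result is again a multivector, to obtain the intersection formula~(\ref{c^j}). The only difference is presentational: you package the iteration as an induction on $d$ and spell out the elementary set-theoretic partition argument for the covering and disjointness claims, whereas the paper writes the chain of applications directly and leaves the partition property implicit in~(\ref{c^j}).
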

\begin{proof}
The assertion follows from multiple application of Lemma \ref{l:decomp_along_B} and the fact, that every part of $\A$ again is a multivector.
\begin{equation}\begin{aligned}
\A_{\vec c^{\l}(\B)}\overset{\text{def. }\ref{d:c^j}}=&((\A_{\vec c^{l_1}(\e_{\k(1)})})_{\vec c^{l_2}(\e_{\k(2)})}...)_{\vec c^{l_d}(\e_{\k(d)})},\\
\overset{\text{Lem. }\ref{l:decomp_along_B}}=&(\sum_{\j\in\vec c^{l_1}(\e_{\k(1)})}a_{\j}\e_{\j})_{\vec c^{l_2}(\e_{\k(2)})}...)_{\vec c^{l_d}(\e_{\k(d)})}\\
\overset{\text{Lem. }\ref{l:decomp_along_B}}=&(\sum_{\j\in\vec c^{l_1}(\e_{\k(1)})\text{ and }\j\in\vec c^{l_2}(\e_{\k(2)})}a_{\j}\e_{\j})_{\vec c^{l_3}(\e_{\k(3)})}...)_{\vec c^{l_d}(\e_{\k(d)})}\\
=&(\sum_{\j\in\vec c^{l_1}(\e_{\k(1)})\cap\vec c^{l_2}(\e_{\k(2)})}a_{\j}\e_{\j})_{\vec c^{l_3}(\e_{\k(3)})}...)_{\vec c^{l_d}(\e_{\k(d)})}\\
=&...\\
\overset{\text{Lem. }\ref{l:decomp_along_B}}=&\sum_{\j\in\bigcap_{\nu=1}^d \vec c^{l_{\nu}}(\e_{\k(\nu)}))}a_{\j}\e_{\j}\\
=&\sum_{\j\in\vec c^{\l}(B)}a_{\j}\e_{\j}
\end{aligned}\end{equation}
\end{proof}
\begin{rem}
Now for $d\in\N$ basis blades we can use the mapping $\vec c^{\l}:(\G^{p,q})^d\to\P(\{\j\subset\{1,...,n\},1\leq j_1,<...,<j_{\iota}\leq n\})$ to express the decomposition of a multivector. Compared to Definition \ref{d:c^j} it can be computed much faster using the formula (\ref{c^j}), which by the way again shows very clearly that the partition does not depend on the order of the blades.
\end{rem}
\begin{ex}
 Like in the two preceding examples 
we look at $\A\in\G^{2,0}$ but this time we use (\ref{c^j}). From
\begin{equation}
{\vec c^{0,0}({\e_{1},\e_{2}})}=\vec c^{0}(\e_{1})\cap\vec c^{0}(\e_{2})=\{\{0\},\{1\}\}\cap\{\{0\},\{2\}\}=\{0\}
\end{equation}
follows
\begin{equation}
\A_{\vec c^{0,0}({\e_{1},\e_{2}})}=\sum_{j\in\vec c^{0,0}({\e_{1},\e_{2}})}a_{�\j}\e_{\j}=a_0
\end{equation}
and computing the other parts analogously we get
\begin{equation}\begin{aligned}
\A=&\A_{\vec c^{0,0}({\e_{1},\e_{2}})}+\A_{\vec c^{1,0}({\e_{1},\e_{2}})}+\A_{\vec c^{0,1}({\e_{1},\e_{2}})}+\A_{\vec c^{1,1}({\e_{1},\e_{2}})}\\
=&a_0+a_1\e_1+a_2\e_2+a_{12}\e_{12}\\
=&\sum_{j}a_{\j}\e_{\j}.
\end{aligned}\end{equation}
\end{ex}
\begin{rem}
The decomposition with respect to basis blades is independent from the multivector $\A$ and the total amount of parts that occur is limited by $\min\{2^d,2^n\}$, where $d$ is the number of blades in $B$ and $n=p+q$ the dimension of the underlying vector space. In the case of the previous example this means that a higher $d$ would not result in a finer segmentation of $\A$. Certain combinations of commutation properties will just remain empty, for instance
\begin{equation}
\vec c^{1,1,1}(\e_{1},\e_{2},\e_{12})=
\emptyset.
\end{equation}
\end{rem}
%
%
Now we take a look at the decomposition of exponentials of functions as they appear in (\ref{gft}) that satisfy condition (\ref{bed}) and show that they take a very simple form.
%
\begin{lem}\label{l:2parts}
Let the value of $f(\x,\u):\R^m\times \R^m\to\mathscr I^{p,q}$ be a real multiple of a basis blade $\forall \x,\u\in\R^m$, $f(\x,\u)=\sgn(f(\x,\u))|f(\x,\u)|\e_{\k(\x,\u)}$ like in (\ref{bed}). The decompositions $e^{-f(\x,\u)}_{\vec c^{0}(\e_{\l})},e^{-f(\x,\u)}_{\vec c^{1}(\e_{\l})}$ of the exponential with respect to any basis blade $\e_{\l}\in\G^{p,q}$ can only take two different shapes:
\begin{equation}\begin{aligned}
e^{-f(\x,\u)}_{\vec c^{0}(\e_{\l})}=&\begin{cases}e^{-f(\x,\u)}&\text{ if }\k(\x,\u)\in\vec c^{0}(\e_{\l}),\\ 
\cos(|f(\x,\u)|)&\text{ if }\k(\x,\u)\notin\vec c^{0}(\e_{\l}),\end{cases}\\
e^{-f(\x,\u)}_{\vec c^{1}(\e_{\l})}=&\begin{cases} -\frac{f(\x,\u)}{|f(\x,\u)|}\sin(|f(\x,\u)|)&\text{ if }\k(\x,\u)\in\vec c^{1}(\e_{\l}),\\
0&\text{ if }\k(\x,\u)\notin\vec c^{1}(\e_{\l}).\end{cases}\\
\end{aligned}\end{equation}
\end{lem}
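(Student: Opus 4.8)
The plan is to expand the exponential as a power series and use the fact that $f(\x,\u)^2 = -|f(\x,\u)|^2$ is a negative real scalar, so that $e^{-f(\x,\u)}$ is, pointwise, of the familiar Euler form. Writing $i := \e_{\k(\x,\u)}$ and $r := |f(\x,\u)|$, we have $f = \sgn(f)\,r\,i$ with $i^2 = -1$ (or more precisely $(\sgn(f) i)^2 = -1$; I will absorb the sign into $i$), hence
\begin{equation}\begin{aligned}
 e^{-f(\x,\u)} = \cos(r) - \frac{f(\x,\u)}{r}\sin(r) = \cos r - i\,\sgn(f)\sin r,
\end{aligned}\end{equation}
where the scalar part $\cos r$ is a real multiple of $\e_0 = 1$ and the remaining term $-\frac{f}{r}\sin r$ is a real multiple of the single basis blade $\e_{\k(\x,\u)}$.

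Next I would apply Lemma \ref{l:decomp_along_B}, which tells us exactly how the $\vec c^0(\e_{\l})$/$\vec c^1(\e_{\l})$ decomposition acts on a multivector expressed in the basis: it simply collects those basis-blade components whose index $\j$ lies in $\vec c^0(\e_{\l})$ (respectively $\vec c^1(\e_{\l})$), and these two index sets partition all multi-indices. Since $e^{-f(\x,\u)}$ has only two nonzero components — one along $\e_0$ and one along $\e_{\k(\x,\u)}$ — the decomposition is governed entirely by where these two indices fall. The index $\emptyset$ (i.e. $\k=\emptyset$, $\kappa = 0$) always satisfies $\iota\kappa - \beta = 0$, which is even, so $\e_0 \in \vec c^0(\e_{\l})$ for every $\e_{\l}$; thus the scalar part $\cos r$ is always entirely in the $\vec c^0$ piece and contributes nothing to $\vec c^1$. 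The blade $\e_{\k(\x,\u)}$ lies in exactly one of $\vec c^0(\e_{\l})$ or $\vec c^1(\e_{\l})$, and this is precisely the case distinction in the statement.

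Combining these: if $\k(\x,\u) \in \vec c^0(\e_{\l})$, then both components of $e^{-f(\x,\u)}$ sit in the $\vec c^0$ part, so $e^{-f}_{\vec c^0(\e_{\l})} = e^{-f}$ and $e^{-f}_{\vec c^1(\e_{\l})} = 0$; if $\k(\x,\u) \in \vec c^1(\e_{\l})$ (equivalently $\k(\x,\u) \notin \vec c^0(\e_{\l})$), then only the scalar part is in $\vec c^0$, giving $e^{-f}_{\vec c^0(\e_{\l})} = \cos r$, while the blade part gives $e^{-f}_{\vec c^1(\e_{\l})} = -\frac{f(\x,\u)}{|f(\x,\u)|}\sin(|f(\x,\u)|)$. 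This matches the claimed formulas.

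I do not expect a serious obstacle here; the only points requiring a little care are: (i) justifying the Euler-type formula for $e^{-f}$, which follows from termwise summation of the absolutely convergent exponential series together with $f^2 = -|f|^2 \in \R$ (and handling the degenerate case $|f(\x,\u)| = 0$ separately, where $e^{-f} = 1$ is purely scalar and both claimed formulas still hold, interpreting $\frac{f}{|f|}\sin|f|$ as its limit $0$); and (ii) being explicit that $\vec c^0(\e_{\l})$ and $\vec c^1(\e_{\l})$ partition the full index set and that $\e_0$ always lands in $\vec c^0(\e_{\l})$, both of which are immediate from the index-set description in Lemma \ref{l:decomp_along_B}. The genuinely substantive input — that the decomposition of any multivector with respect to a basis blade is just a regrouping of its coordinate components — is already established, so the proof is essentially an application of that lemma to the two-term multivector $e^{-f(\x,\u)}$.
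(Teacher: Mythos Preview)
Your proof is correct and follows essentially the same route as the paper: write $e^{-f}$ in Euler form as a two-term multivector (scalar cosine plus a real multiple of $\e_{\k(\x,\u)}$), then invoke Lemma \ref{l:decomp_along_B} to sort those two basis-blade components into $\vec c^0(\e_{\l})$ and $\vec c^1(\e_{\l})$, noting that $\e_0$ always lands in $\vec c^0$. Your version is slightly more explicit about why $\e_0\in\vec c^0(\e_{\l})$ and about the power-series justification; the degenerate case $|f(\x,\u)|=0$ you flag cannot actually occur, since $f(\x,\u)\in\mathscr I^{p,q}$ forces $f(\x,\u)^2\in\R^-$, but the extra caution does no harm.
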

\begin{proof}
The exponential can be expressed as its sine and its cosine part.
So the decomposition into basis blades
\begin{equation}\begin{aligned}\label{sincosin}
e^{-f(\x,\u)}=&\sum_{\j}a_{\j}\e_{\j}\\
=&\cos(|f_l(\x,\u)|)\e_{0}-\sgn(f(\x,\u))\sin(|f_l(\x,\u)|)\e_{\k(\x,\u)}
\end{aligned}\end{equation}
has only two coefficients $a_{0}$ and $a_{\k(\x,\u)}$ different from zero. Lemma \ref{l:decomp_along_B} showed that the blades $\e_{\j}$ of (\ref{sincosin}) are sorted either into $\vec c^0$ or into $\vec c^1$ during the decomposition. Because $\e_0$ is real, it commutes with anything and therefore the cosine always belongs to $\vec c^0(\e_{\l})$ and never to $\vec c^1(\e_{\l})$. For the appearance of the sine we have to distinguish whether or not $\k(\x,\u)\in\vec c^0(\e_{\l})$ or $\k(\x,\u)\in\vec c^1(\e_{\l})$, which leads to the assertion.
\end{proof}

\begin{lem}\label{l:4parts}
Let $f(\x,\u):\R^m\times \R^m\to\mathscr I^{p,q}$ satisfy property (\ref{bed}) $\forall \x,\u\in\R^m$, $\l\in\{0,1\}^d$ a multi-index and $B=\{\e_{\k(1)},...,\e_{\k(d)}\}$ be a set of basis blades. The decompositions of the exponential with respect to $B$ can only take four different shapes:
\begin{equation}\begin{aligned}\label{4shapes}
e^{-f(\x,\u)}_{\vec c^{\l}(B)}=&\begin{cases}e^{-f(\x,\u)}&\text{ if }\l=0\text{ and }{\k(\x,\u)}\in\vec c^{\l}(B), \\
\cos(|f(\x,\u)|)&\text{ if }\l=0\text{ and }{\k(\x,\u)}\notin\vec c^{\l}(B),\\
-\frac{f(\x,\u)}{|f(\x,\u)|}\sin(|f_l(\x,\u)|)&\text{ if }\l\neq0\text{ and }{\k(\x,\u)}\in\vec c^{\l}(B),\\
0&\text{ if }\l\neq0\text{ and }{\k(\x,\u)}\notin\vec c^{\l}(B).\end{cases}\\
\end{aligned}\end{equation}
\end{lem}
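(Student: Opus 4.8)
The plan is to reduce the statement about decomposing $e^{-f(\x,\u)}$ with respect to the \emph{set} $B$ of basis blades to the already-established two-part decomposition with respect to a \emph{single} basis blade, namely Lemma~\ref{l:2parts}, via the intersection formula for $\vec c^{\l}(B)$ from Lemma~\ref{l:decomp_along_B2}. First I would recall from (\ref{sincosin}) that $e^{-f(\x,\u)}=\cos(|f(\x,\u)|)\e_0-\sgn(f(\x,\u))\sin(|f(\x,\u)|)\e_{\k(\x,\u)}$, so that as a multivector it has at most the two blades $\e_0$ and $\e_{\k(\x,\u)}$ appearing. By Lemma~\ref{l:decomp_along_B2} the decomposition $e^{-f(\x,\u)}_{\vec c^{\l}(B)}=\sum_{\j\in\vec c^{\l}(B)}a_{\j}\e_{\j}$ only picks up those of these two blades whose index lies in $\vec c^{\l}(B)=\bigcap_{\nu=1}^d\vec c^{l_{\nu}}(\e_{\k(\nu)})$.

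Next I would pin down where the two relevant indices land. The empty index $\emptyset$ (the scalar part) satisfies $\iota\kappa-\beta(\e_\emptyset,\e_{\k(\nu)})=0$ for every $\nu$, which is even, so $\emptyset\in\vec c^{0}(\e_{\k(\nu)})$ for all $\nu$, hence $\emptyset\in\vec c^{\l}(B)$ exactly when $\l=0$ (the all-zero multi-index), and $\emptyset\notin\vec c^{\l}(B)$ whenever $\l\neq 0$. Thus the cosine term $\cos(|f(\x,\u)|)\e_0$ survives in $e^{-f(\x,\u)}_{\vec c^{\l}(B)}$ if and only if $\l=0$. The index $\k(\x,\u)$, on the other hand, lies in exactly one of the sets $\vec c^{\l}(B)$ as $\l$ ranges over $\{0,1\}^d$, because the $\vec c^{\l}(B)$ partition the full index set (Lemma~\ref{l:decomp_along_B2}); write $\l^\ast$ for that unique multi-index, so the sine term $-\sgn(f(\x,\u))\sin(|f(\x,\u)|)\e_{\k(\x,\u)}=-\tfrac{f(\x,\u)}{|f(\x,\u)|}\sin(|f(\x,\u)|)$ survives in $e^{-f(\x,\u)}_{\vec c^{\l}(B)}$ precisely when $\l=\l^\ast$, i.e. when $\k(\x,\u)\in\vec c^{\l}(B)$.

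Combining these two observations gives the four cases directly: if $\l=0$ and $\k(\x,\u)\in\vec c^{\l}(B)$ then both terms survive and we recover the full $e^{-f(\x,\u)}$; if $\l=0$ and $\k(\x,\u)\notin\vec c^{\l}(B)$ only the cosine survives; if $\l\neq 0$ and $\k(\x,\u)\in\vec c^{\l}(B)$ only the sine term survives, giving $-\tfrac{f(\x,\u)}{|f(\x,\u)|}\sin(|f(\x,\u)|)$; and if $\l\neq 0$ and $\k(\x,\u)\notin\vec c^{\l}(B)$ neither survives and the part is $0$. One degenerate subtlety worth a remark is the case $\k(\x,\u)=\emptyset$, i.e. $f(\x,\u)$ scalar (which forces $|f(\x,\u)|=0$ since $f$ maps into $\mathscr I^{p,q}$); there $\sin(|f|)=0$ so the sine term vanishes anyway and the formula stays consistent, and one should note $-\tfrac{f}{|f|}\sin(|f|)$ is to be read as its limit $0$ when $|f|=0$.

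I do not expect a genuine obstacle here: the only care needed is the bookkeeping that $\l=0$ is equivalent to $\emptyset\in\vec c^{\l}(B)$, and that $\k(\x,\u)$ sits in exactly one block of the partition $\{\vec c^{\l}(B)\}_{\l}$. Both are immediate from Lemmas~\ref{l:decomp_along_B} and~\ref{l:decomp_along_B2}, so the proof is essentially a case-matching argument once (\ref{sincosin}) and the intersection formula (\ref{c^j}) are in hand.
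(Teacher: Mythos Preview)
Your proposal is correct and follows essentially the same approach as the paper's proof: both arguments use the two-term expansion (\ref{sincosin}) together with the intersection formula (\ref{c^j}) from Lemma~\ref{l:decomp_along_B2} to track which of the scalar (cosine) and $\e_{\k(\x,\u)}$ (sine) blades survive in $\vec c^{\l}(B)$. Your version is somewhat more explicit in the bookkeeping (computing $\iota\kappa-\beta=0$ for $\e_\emptyset$ and invoking the partition property for $\e_{\k(\x,\u)}$), but the underlying idea is identical.
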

\begin{proof}
 In Lemma \ref{l:2parts} we saw that for a fixed $\lambda\in\{0,...,d\}$ the ${\vec c^{1}(\e_{\k(\lambda)})}$ always removes the cosine part, so it will only remain for $l_1=...=l_d=0$. Analogously that the sine part is removed when $\k(\x,\u)\notin\vec c^{l_\lambda}(\e_{\k(\lambda)})$.
Because of $\vec c^{\l}(\e_{\k(1)},...,\e_{\k(d)})=\bigcap_{\lambda=1}^d \vec c^{l_{\lambda}}(\e_{\k(\lambda)})$, compare Lemma \ref{l:decomp_along_B2}, one appearance of such a case is sufficient to eliminate each part from the whole exponential.
\end{proof}
\begin{rem}
The shift theorem introduced in \cite{BSH11} takes a simpler form for GFTs satisfying property (\ref{bed}), that means for all transforms in the first example
. The simplification results from the predictable shape of the decomposition of exponentials with respect to basis blades from Lemma \ref{l:4parts}.
\end{rem}
%
\section{Geometric Convolution Theorem}
%
%
We have seen that coorthogonal blade functions can be expressed as real multiples of basis blades $f(\x,\u)= \sgn(f(\x,\u))|f(\x,\u)|\e_{\j(\x,\u)}:\R^m\times\R^m\to\mathscr I^{p,q}$ in Notation \ref{b:coorth}.
Lemma \ref{l:decomp_along_B2} guarantees for basis blade functions, that during the decomposition into commutative and anticommutative parts of a multivector no additional terms appear in the sum over the basis blades (\ref{mimv}). Each part is a real fragment of the multivector along the basis blades of the orthogonal basis from Theorem \ref{t:orth_basis}. Because of that an exponential can only become decomposed into four different shapes: itself, a cosine, a basis blade multiplied with a sine or zero, compare Lemma \ref{l:4parts}. This motivates the generalization of geometric Fourier transforms (\ref{gft}) to trigonometric transforms. We will just use it as an auxiliary construction here and analyze its properties and applications in a future paper.
\begin{defn}[Geometric Trigonometric Transform]\label{d:gtt}
 Let $\A:\R^m\to\G^{p,q}$ be a multivector field and $\x,\u\in\R^m$ vectors, $F_1,F_2$
two ordered finite sets of $\mu$, respectively $\nu-\mu$, mappings $\R^m\times \R^m\to\mathscr I^{p,q}$, $G_1,G_2$ two ordered finite sets of $\mu$, respectively $\nu-\mu$, mappings $(\R^m\times \R^m\to\mathscr I^{p,q})\to\G^{p,q}$ with each $g_l(-f_l(\x,\u))\forall l=1,...,\nu$ having one of the shapes from (\ref{4shapes}):
\begin{equation}\begin{aligned}\label{g_in_gtt}
g_l(-f_l(\x,\u)) =&\begin{cases}e^{-f_l(\x,\u)}\\
\cos(|f_l(\x,\u)|)\\
-\frac{f_l(\x,\u)}{|f_l(\x,\u)|}\sin(|f_l(\x,\u)|),\\
0.\end{cases}
\end{aligned}\end{equation}
The \textbf{Geometric Trigonometric Transform} (GTT) $\F_{G_1(F_1),G_2(F_2)}(\A)$ is defined by 
\begin{equation}
 \F_{G_1(F_1),G_2(F_2)}(\A)(\u):=\int_{\R^m}\prod_{l=1}^{\mu}g_l({-f_l(\x,\u)})\A(\x)\prod_{l=\mu+1}^{\nu}g_l({-f_l(\x,\u)}).
\end{equation}
\end{defn}
\begin{bez}
We have seen in Lemma \ref{l:4parts} that the decomposition of an exponential with respect to basis blades takes the same shape like the functions $G_1,G_2$ of a GTT (\ref{g_in_gtt}). Therefore for a geometric Fourier transform with basis blade functions $F_1,F_2$, two sets of basis blades $B_1=\{\e_{\k(1)},...,\e_{\k(\eta)}\}$, $B_2=\{\e_{\k(\eta+1)},...,\e_{\k(\theta-\eta)}\}$ and strictly lower and upper triangular matrices\footnote{These matrices were introduced originally in Lemma 6.8 in \cite{BSH11}. It is repeated in this work as Lemma \ref{l:expcom2}. The proof can be found in \cite{BSH11}.} $J\in\{0,1\}^{\mu\times\eta}$, $K\in\{0,1\}^{(\nu-\mu)\times\theta}$ whose rows are $\mu$ and $\nu-\mu$ multi-indices $(J)_l\in\{0,1\}^{\eta}$ respectively $(K)_l\in\{0,1\}^{\theta}$, we can construct a geometric trigonometric transform $\F_{G_1(F_1),G_2(F_2)}(\A)$ by setting $g_l(-f_l(\x,\u))=e^{-f_l(\x,\u)}_{\vec c^{(J)_l}}$ for $l=1,...,\mu$ and $g_l(-f_l(\x,\u))=e^{-f_l(\x,\u)}_{\vec c^{(K)_l}}$ for $l=\mu+1,.
..,\nu$. We refer to it
shortly as
\begin{equation}
 \F_{(F_1)_{\vec c^{J}(B_1)},(F_2)_{\vec c^{K}(B_2)}}(\A)(\u):=\int_{\R^m}\prod_{l=1}^{\mu}e^{-f_l(\x,\u)}_{\vec c^{(J)_l}(B_1)}\A(\x)\prod_{l=\mu+1}^{\nu}e^{-f_l(\x,\u)}_{\vec c^{(K)_{l-\mu}}(B_2)}\d^m \x.
\end{equation}
In the case of $\F_{(F_1)_{\vec c^{J}(F_1)},(F_2)_{\vec c^{K}(F_2)}}$ we will only write $ \F_{(F_1)_{\vec c^{J}},(F_2)_{\vec c^{K}}}$.
\end{bez}
The geometric trigonometric transform is a generalization of the geometric Fourier transform from (\ref{gft}). We will use it to prove the convolution theorem of the GFT. To accomplish this we additionally need the following facts shown in \cite{BSH11}. Please note that for the proofs of all Lemmata from \cite{BSH11} the claim for the set of functions to be basis blades functions is not necessary.
\begin{defn}
We call a GFT \textbf{left (right) separable}, if
\begin{equation}
\label{ifix}
f_l=|f_l(\x,\u)|i_l(\u),
\end{equation}
$\forall l=1,...,\mu$, ($l=\mu+1,...,\nu$), where $|f_l(\x,\u)|:\R^m\times\R^m\to\R$ is a real function and $i_l:\R^m\to\mathscr I^{p,q}$ a function that does not depend on $\x$.
\end{defn}
\begin{lem}\label{l:expcom}
  Let $F=\{f_1(\x,\u),...,f_{d}(\x,\u)\}$ be a set of pointwise invertible functions then the ordered product of their exponentials and an arbitrary multivector $\A\in\G^{p,q}$ satisfies
\begin{equation}\begin{aligned}
 \prod_{l=1}^{d}e^{-f_l(\x,\u)}\A=\sum_{\j\in\{0,1\}^d}\vec A_{\vec c^{\j}(\overleftarrow F)}(\x,\u)\prod_{l=1}^{d}e^{-(-1)^{j_l}f_l(\x,\u)},
\end{aligned}\end{equation}
where $A_{\vec c^{\j}(\overleftarrow F)}(\x,\u):=A_{\vec c^{\j}(\overleftarrow {F(\x,\u)})}$ is a multivector valued function $\R^m\times\R^m\to\G^{p,q}$.
\end{lem}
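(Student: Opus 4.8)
The plan is to prove the identity by induction on the cardinality $d$ of the set $F$; the case $d=1$ does all the work and the inductive step is pure bookkeeping with the recursive Definition \ref{d:c^j}.

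For $d=1$, write $f:=f_1$ and decompose $\A=\A_{\vec c^0(f)}+\A_{\vec c^1(f)}$ as in Definition \ref{d:c}. I would first record that, since $f$ is $\mathscr I^{p,q}$-valued, $f^2\in\R^-$ is central in $\G^{p,q}$, so $f^{-2}\A f^2=\A$; conjugating $\A_{\vec c^0(f)}=\tfrac12(\A+f^{-1}\A f)$ and $\A_{\vec c^1(f)}=\tfrac12(\A-f^{-1}\A f)$ by $f$ then gives $f^{-1}\A_{\vec c^0(f)}f=\A_{\vec c^0(f)}$ and $f^{-1}\A_{\vec c^1(f)}f=-\A_{\vec c^1(f)}$, i.e.\ this is exactly the splitting of $\A$ into its $f$-commuting and $f$-anticommuting parts. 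Expanding $e^{-f}=\sum_{k\geq 0}\tfrac{(-f)^k}{k!}$ (an absolutely convergent series in the finite-dimensional algebra $\G^{p,q}$), the commuting part slides through unchanged, $e^{-f}\A_{\vec c^0(f)}=\A_{\vec c^0(f)}e^{-f}$, while for the anticommuting part $(-f)^k\A_{\vec c^1(f)}=\A_{\vec c^1(f)}f^k$ for every $k\geq 0$, whence $e^{-f}\A_{\vec c^1(f)}=\A_{\vec c^1(f)}e^{f}$. Adding the two contributions gives $e^{-f}\A=\sum_{j\in\{0,1\}}\A_{\vec c^{j}(f)}\,e^{-(-1)^{j}f}$, which is the claim for $d=1$ and which holds for any multivector in place of $\A$.

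For the inductive step, assume the formula for $d-1$ and write $\prod_{l=1}^{d}e^{-f_l}\A=e^{-f_1}\bigl(\prod_{l=2}^{d}e^{-f_l}\A\bigr)$. Applying the hypothesis to $F''=\{f_2,\dots,f_d\}$ rewrites the bracket as $\sum_{\j''\in\{0,1\}^{d-1}}\A_{\vec c^{\j''}(\overleftarrow{F''})}\prod_{l=2}^{d}e^{-(-1)^{j_l}f_l}$. To each summand I then apply the $d=1$ identity to the multivector $\A_{\vec c^{\j''}(\overleftarrow{F''})}$, pushing $e^{-f_1}$ to its right and producing $\sum_{j_1\in\{0,1\}}\bigl(\A_{\vec c^{\j''}(\overleftarrow{F''})}\bigr)_{\vec c^{j_1}(f_1)}e^{-(-1)^{j_1}f_1}$. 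By Definition \ref{d:c^j} the coefficient $\bigl(\A_{\vec c^{\j''}(\overleftarrow{F''})}\bigr)_{\vec c^{j_1}(f_1)}$ equals $\A_{\vec c^{\j}(\overleftarrow F)}$ with $\j=(j_1,\j'')\in\{0,1\}^{d}$, and the exponentials accumulate to $\prod_{l=1}^{d}e^{-(-1)^{j_l}f_l}$; summing over all $\j$ yields the assertion. That $\A_{\vec c^{\j}(\overleftarrow F)}(\x,\u)$ is a multivector valued function of $(\x,\u)$ is clear, since it is obtained from the pointwise defined multivector $\A$ by algebraic operations only.

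The only genuinely delicate point is the one in the base case: that Definition \ref{d:c} really furnishes the commuting/anticommuting decomposition with respect to $f$, which rests on the centrality of $f^2$ — i.e.\ on $f$ being $\mathscr I^{p,q}$-valued, not merely invertible — together with the observation that an (anti)commutation relation with $f$ propagates term by term through the exponential series, so that anticommutation with $f$ becomes the sign flip $f\mapsto -f$ inside $e^{-f}$. Everything else is routine manipulation of the recursive definitions.
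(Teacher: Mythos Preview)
Your proof is correct and matches the approach the paper points to. The paper does not reprove Lemma~\ref{l:expcom} here (it is quoted from \cite{BSH11}), but the proof of the generalization Lemma~\ref{l:expcom2} ends with ``Applying it repeatedly to the whole product like in the proof of Lemma~\ref{l:expcom} in \cite{BSH11}'', confirming that the intended argument is exactly your inductive scheme: establish the $d=1$ swap rule via the $\vec c^0/\vec c^1$ splitting and then peel off one exponential at a time, using Definition~\ref{d:c^j} to reassemble $\A_{\vec c^{\j}(\overleftarrow F)}$. Your remark that the base case genuinely needs $f^2$ central (guaranteed by $f\in\mathscr I^{p,q}$, cf.\ the passage before Definition~\ref{d:c}) rather than mere invertibility is well taken; the lemma's phrase ``pointwise invertible'' should be read in that context.
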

\begin{lem}\label{l:prodtrennen}
 Let $F=\{f_1(\x,\u),...,f_{d}(\x,\u)\}$ be a set of separable functions that are linear with respect to $\x$. Further let $J\in\{0,1\}^{d\times d}$ be a strictly lower triangular matrix, that is associated column by column with a multi-index ${\j\in\{0,1\}^{d}}$ by $\forall k=1,...,d:
(\sum_{l=1}^{d}J_{l,k})\bmod 2=j_{k}$, with $(J)_l$ being its $l$-th row, then 
\begin{equation}\begin{aligned}\label{prodtrennen1}
&\prod_{l=1}^d e^{-f_l(\x+\y,\u)}
\\&= \sum_{\j\in\{0,1\}^{d}}\sum_{\stackrel{J\in\{0,1\}^{d\times d},}{\sum_{l=1}^d(J)_l\bmod 2=\j}}\prod_{l=1}^{d} e^{-f_l(\x,\u)}_{\vec c^{(J)_l}(\overleftarrow{f_1,...,f_l,0,...,0})}\prod_{l=1}^{d}e^{-(-1)^{j_l}f_l(\y,\u)}
\end{aligned}\end{equation}
or alternatively with strictly upper triangular matrices $J$
\begin{equation}\begin{aligned}\label{prodtrennen2}
&\prod_{l=1}^d e^{-f_l(\x+\y,\u)}
\\&=\sum_{\j\in\{0,1\}^{d}}\sum_{\stackrel{J\in\{0,1\}^{d\times d},}{\sum_{l=1}^d(J)_l\bmod 2=\j}}\prod_{l=1}^{d}e^{-(-1)^{j_l}f_l(\x,\u)}\prod_{l=1}^{d} e^{-f_l(\y,\u)}_{\vec c^{(J)_l}(\overrightarrow{0,...,0,f_l,...,f_d})}.
\end{aligned}\end{equation}
\end{lem}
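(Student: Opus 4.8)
The plan is to separate the $\x$- and $\y$-dependence inside each exponential and then transport all the $\y$-exponentials to the right-hand end of the product, recording the anticommutations that occur along the way in the entries of the matrix $J$.

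First I would use the two hypotheses on $F$. Separability gives $f_l(\x,\u)=|f_l(\x,\u)|\,i_l(\u)$ and $f_l(\y,\u)=|f_l(\y,\u)|\,i_l(\u)$ with the same $i_l(\u)\in\mathscr I^{p,q}$, since $i_l$ does not depend on the first argument; hence $f_l(\x,\u)$ and $f_l(\y,\u)$ are real multiples of one another and commute, while linearity in the first argument gives $f_l(\x+\y,\u)=f_l(\x,\u)+f_l(\y,\u)$. Consequently $e^{-f_l(\x+\y,\u)}=e^{-f_l(\x,\u)}e^{-f_l(\y,\u)}$ for every $l$, and therefore $\prod_{l=1}^d e^{-f_l(\x+\y,\u)}=e^{-f_1(\x,\u)}e^{-f_1(\y,\u)}\cdots e^{-f_d(\x,\u)}e^{-f_d(\y,\u)}$, an alternating product of $\x$- and $\y$-exponentials.

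I would then prove (\ref{prodtrennen1}) by induction on $d$. The case $d=1$ is exactly the commutation just established, since the only strictly lower triangular matrix is $(0)$ and $\vec c^{0}(\overleftarrow{f_1})$ fixes the power series $e^{-f_1(\x,\u)}$, which commutes with $f_1$. For the inductive step I would split off the last pair $e^{-f_d(\x,\u)}e^{-f_d(\y,\u)}$, apply the inductive hypothesis to the first $2(d-1)$ factors, and then slide the single factor $e^{-f_d(\x,\u)}$ to the left past the block $\prod_{l=1}^{d-1}e^{-(-1)^{j_l}f_l(\y,\u)}$ by means of Lemma \ref{l:expcom}, taking the multivector there to be $\A:=e^{-f_d(\x,\u)}$. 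Lemma \ref{l:expcom} produces a sum over a fresh multi-index $\k\in\{0,1\}^{d-1}$; its $\k$-term puts $\A_{\vec c^{\k}(\overleftarrow{f_1,\dots,f_{d-1}})}$ in front of the $\y$-block and multiplies the sign of $f_l(\y,\u)$ by $(-1)^{k_l}$. Since the operations $\vec c^{0},\vec c^{1}$ are insensitive to nonzero real scalar factors and to signs in their argument, the decomposition with respect to $f_l(\y,\u)$ is the same as the one with respect to $f_l$, and after padding $\k$ with zeros to length $d$ this factor is literally $e^{-f_d(\x,\u)}_{\vec c^{(J)_d}(\overleftarrow{f_1,\dots,f_d,0,\dots,0})}$ with the new row $(J)_d=(k_1,\dots,k_{d-1},0)$, the operators $\vec c^{0}(f_d)$ and $\vec c^{0}(0)$ acting as the identity. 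Reading $\k$ as the appended $d$-th row turns a strictly lower triangular matrix of size $d-1$ into one of size $d$; the column sums update to $j_l\mapsto(j_l+k_l)\bmod 2$ for $l<d$ while column $d$ stays $0$; and collecting all $\x$-parts on the left and all $\y$-parts on the right gives the right-hand side of (\ref{prodtrennen1}).

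Formula (\ref{prodtrennen2}) is obtained by the mirror-symmetric argument in which one instead slides each $\y$-exponential to the right past the $\x$-exponentials that follow it; this lands the $\vec c$-decompositions on the $\y$-exponentials with index sets $\overrightarrow{0,\dots,0,f_l,\dots,f_d}$, leaves the sign changes on the $\x$-exponentials, and replaces strictly lower by strictly upper triangular matrices. I expect the only real work to be the bookkeeping: aligning the arrow directions in the iterated $\vec c$-operators and the zero-padding of the rows of $J$ with what Lemma \ref{l:expcom} actually outputs at each step, and checking that the double sum $\sum_{\j}\sum_{J}$ sweeps out every strictly lower (respectively upper) triangular $0/1$ matrix exactly once. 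The only genuinely non-formal points are the identity $e^{-f_l(\x+\y,\u)}=e^{-f_l(\x,\u)}e^{-f_l(\y,\u)}$, which needs both separability and $\x$-linearity, and the fact that the decompositions depend on their blade argument only up to a nonzero real multiple.
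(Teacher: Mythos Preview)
Your approach is correct and is the natural one: split each exponential via separability and $\x$-linearity, then shuttle the resulting $\y$-exponentials past the $\x$-exponentials one layer at a time with Lemma~\ref{l:expcom}, recording the accumulated sign flips in the rows of $J$. Note, however, that the present paper does not actually prove this lemma; it quotes the statement from \cite{BSH11} and sends the reader there for the argument, so there is no in-paper proof to compare against. Your induction matches what one expects that argument to be. Two minor bookkeeping remarks: for $l=d$ the list $(f_1,\dots,f_l,0,\dots,0)$ has no trailing zeros, so your displayed ``$f_1,\dots,f_d,0,\dots,0$'' should simply read $f_1,\dots,f_d$; and the convention that $\vec c^{\,0}(0)$ (and, for the diagonal entry, $\vec c^{\,0}(f_l)$ applied to $e^{-f_l}$) acts as the identity is implicit in the lemma's own notation rather than in Definition~\ref{d:c}, so you are right to single it out.
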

\begin{defn}\label{d:fj}
 For a set of functions $F=\{f_1(\x,\u),...,f_{d}(\x,\u)\}$ and a multi-index ${\j\in\{0,1\}^d}$, we define the set of functions $F(\j)$ by
\begin{equation}\begin{aligned}
 F(\j):=\{(-1)^{j_1}f_1(\x,\u),...,(-1)^{j_{d}}f_{d}(\x,\u)\}.
\end{aligned}\end{equation}
\end{defn}
We also need a generalization of Lemma \ref{l:expcom} that allows us to swap the order of partial exponentials and multivectors.
\begin{lem}\label{l:expcom2}
For sets of functions $F=\{f_1(\x,\u),...,f_{d}(\x,\u)\},G=\{g_1,...,g_{d}\}$ like in (\ref{g_in_gtt}) we get analogously to Lemma \ref{l:expcom}
\begin{equation}
 \prod_{l=1}^{d}g_l({-f_l(\x,\u)})\A=\sum_{\j\in\{0,1\}^d}\vec A_{\vec c^{\j}(F)}\prod_{l=1}^{d}g_l({-(-1)^{j_l}f_l(\x,\u)}).
\end{equation}
\end{lem}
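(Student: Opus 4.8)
The plan is to mimic the proof of Lemma~\ref{l:expcom} but track where the index-dependent sign lands when the middle factor is a general trigonometric function $g_l$ rather than a full exponential. The whole statement is really a bookkeeping claim: each $g_l(-f_l(\x,\u))$ is, by Definition~\ref{d:gtt}, one of the four shapes in (\ref{g_in_gtt}), and each of these shapes is a polynomial in $\e_{\k(\x,\u)}$ of degree at most one --- namely a real multiple of $\e_0$ (the cosine and the $0$ case), or a real multiple of $\e_{\k(\x,\u)}$ (the sine case), or a sum of both (the exponential case). So $g_l(-f_l(\x,\u))$ lies in the real span of $\{\e_0,\e_{\k(\x,\u)}\}$, hence it is a ``near-basis-blade'' object whose commutation behaviour against an arbitrary $\A$ is governed entirely by whether $\e_{\k(\x,\u)}$ commutes or anticommutes with each homogeneous-in-$\vec c$ piece of $\A$.

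\textbf{Step 1 (single factor).} First I would prove the $d=1$ case directly. Write $\A=\A_{\vec c^0(f_1)}+\A_{\vec c^1(f_1)}$ using Lemma~\ref{l:prodviele} (with $B=\{f_1\}$), where the decomposition with respect to the blade $f_1$ is the same as with respect to $\e_{\k(\x,\u)}$ by Notation~\ref{b:coorth}. Then $g_1(-f_1)\A_{\vec c^0(f_1)}=\A_{\vec c^0(f_1)}g_1(-f_1)$ and $g_1(-f_1)\A_{\vec c^1(f_1)}=\A_{\vec c^1(f_1)}g_1(+f_1)$: in the commuting piece the blade $\e_{\k}$ passes through unchanged in all four shapes, whereas in the anticommuting piece moving $\A_{\vec c^1}$ to the left flips the sign of every $\e_{\k}$-term of $g_1$, which is precisely the substitution $f_1\mapsto -f_1$ for each of the four shapes (the cosine and the $0$ are even and unaffected; the exponential becomes $e^{+f_1}$; the sine term flips sign, matching $-\frac{(-f_1)}{|{-f_1}|}\sin|{-f_1}|$). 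This gives $g_1(-f_1)\A=\sum_{j\in\{0,1\}}\A_{\vec c^j(f_1)}g_1(-(-1)^j f_1)$.

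\textbf{Step 2 (induction on $d$).} I would then induct on $d$, pulling $g_1(-f_1(\x,\u))$ past $\A$ first by Step~1, applied to the multivector $\A$ with respect to the single function $f_1$, and absorbing the remaining product $\prod_{l=2}^d g_l(-f_l)$ on the right. The only subtlety is that after the first swap the ``multivector'' being carried is $\A_{\vec c^{j_1}(f_1)}$, and we must continue decomposing it with respect to $f_2,\dots,f_d$; this is exactly the nested structure of Definition~\ref{d:c^j}, so the iterated decompositions compose into $\A_{\vec c^{\j}(F)}$ (note: since the $f_l$ need not be coorthogonal here Corollary~\ref{k:r=l} does not apply, and one keeps the ordered form $\overleftarrow{F}$ exactly as in Lemma~\ref{l:expcom}; the statement as written suppresses the arrow, consistent with the paper's later convention). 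Collecting the accumulated sign exponents $(-1)^{j_l}$ on each $f_l$ yields the displayed identity.

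\textbf{The main obstacle} I anticipate is purely notational rather than mathematical: verifying in one clean line that the sign flip $f_l\mapsto -f_l$ is the correct ``output'' of moving an anticommuting multivector across \emph{each} of the four shapes simultaneously, so that the single formula $g_l(-(-1)^{j_l}f_l)$ covers all cases. The cleanest way to dispatch this is to observe that every shape in (\ref{g_in_gtt}) is an even-plus-odd decomposition in the variable $f_l$ — the $\e_0$-part (cosine / whole of $0$) is even, the $\e_{\k}$-part (the sine term / the odd half of the exponential) is odd — and anticommuting with $\e_{\k(\x,\u)}$ acts as $+1$ on the even part and $-1$ on the odd part, which is by definition evaluation at $-f_l$. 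Once that one-line observation is in place, Steps~1 and~2 are routine, and the lemma follows ``analogously to Lemma~\ref{l:expcom}'' exactly as the statement promises.
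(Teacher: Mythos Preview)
Your proposal is correct and follows essentially the same approach as the paper: establish the single-factor identity $g_l(-f_l)\A=\A_{\vec c^0(f_l)}g_l(-f_l)+\A_{\vec c^1(f_l)}g_l(f_l)$ by checking each of the four shapes in (\ref{g_in_gtt}), then iterate over the product exactly as in Lemma~\ref{l:expcom}. Your even/odd-in-$f_l$ observation is a tidy way to package the four cases at once, whereas the paper simply writes out the cosine and sine cases explicitly (invoking Lemma~\ref{l:expcom} for the exponential case and dismissing $0$ as trivial); the content is identical.
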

\begin{proof}
 First we analyze the interaction of $\A$ with one partial exponential $g_l({-f_l(\x,\u)})$. It can take three different shapes 
\begin{equation}\begin{aligned}
g_l(-f_l(\x,\u)) =&\begin{cases}e^{-f_l(\x,\u)}\\
\cos(|f_l(\x,\u)|)\\
-\frac{f_l(\x,\u)}{|f_l(\x,\u)|}\sin(|f_l(\x,\u)|),\\
0.\end{cases}
\end{aligned}\end{equation}
In the first case lemma \ref{l:expcom} proves the assertion and the last one is trivial.
Assume the second case and note that then $g_l(-f_l(\x,\u))$ equals $g_l(f_l(\x,\u))$ because of the symmetry of the cosine.
\begin{equation}\begin{aligned}
 g_l(-f_l(\x,\u))\A=&\cos(|f_l(\x,\u)|)\A\\
\overset{\text{cos. }\in\R}=&\A\cos(|f_l(\x,\u)|)\\
\overset{\text{Lem. }\ref{l:prodviele}}=&\A_{\vec c^{0}(f_l)}\cos(|f_l(\x,\u)|)+\A_{\vec c^{1}(f_l)}\cos(|f_l(\x,\u)|)\\
=&\A_{\vec c^{0}(f_l)}g_l(-f_l(\x,\u))+\A_{\vec c^{1}(f_l)}g_l(f_l(\x,\u))
\end{aligned}\end{equation}
In the third case we have
\begin{equation}\begin{aligned}
 g_l(-f_l(\x,\u))\A=&-\frac{f_l(\x,\u)}{|f_l(\x,\u)|}\sin(|f_l(\x,\u)|)\A\\
\overset{\text{Lem. }\ref{l:prodviele}}=&\A_{\vec c^{0}(f_l)}\frac{-f_l(\x,\u)}{|f_l(\x,\u)|}\sin(|f_l(\x,\u)|)
\\&+\A_{\vec c^{1}(f_l)}\frac{f_l(\x,\u)}{|f_l(\x,\u)|}\sin(|f_l(\x,\u)|)\\
=&\A_{\vec c^{0}(f_l)}g_l(-f_l(\x,\u))+\A_{\vec c^{1}(f_l)}g_l(f_l(\x,\u)).
\end{aligned}\end{equation}
So in all cases we have
\begin{equation}\begin{aligned}
 g_l(-f_l(\x,\u))\A=&\A_{\vec c^{0}(f_l)}g_l(-f_l(\x,\u))+\A_{\vec c^{1}(f_l)}g_l(f_l(\x,\u))\\
=&\sum_{\j\in\{0,1\}^1}\A_{\vec c^{\j}(f_l)}\prod_{l=1}^{1}g_l({-(-1)^{j_l}f_l(\x,\u)}).
\end{aligned}\end{equation}
Applying it repeatedly to the whole product like in the proof of Lemma \ref{l:expcom} in \cite{BSH11} leads to the assertion.
\end{proof}
\begin{defn}
 Let $\A(\x),\B(\x):\R^m\to\G^{p,q}$ be two multivector fields. Their \textbf{convolution} $(\vec{A}*\B)(\x)$ is defined as
\begin{equation}\begin{aligned}
(\vec{A}*\B)(\x):=&\int_{\R^m}\vec{A}(\y)\B(\x-\y)\d^m\y.
\end{aligned}\end{equation}
\end{defn}

\begin{thm}[convolution]\label{t:convolution}
Let $\A,\B,\vec C:\R^m\to\G^{p,q}$ be multivector fields with $\A(\x)=(\vec{C}*\B)(\x)$ and $F_1,F_2$ be coorthogonal, separable and linear with respect to the first argument, ${\j,\j^\prime\in\{0,1\}^{\mu}},\k,\k^\prime\in\{0,1\}^{(\nu-\mu)}$ and $J\in\{0,1\}^{\mu\times \mu}$ and $K\in\{0,1\}^{(\nu-\mu) \times (\nu-\mu)}$ are the strictly lower, respectively upper, triangular matrices with rows $(J)_{l},(K)_{l-\mu}$ summing up to $(\sum_{l=1}^{\mu}(J)_{l})\bmod 2=\j$ respectively $(\sum_{l=\mu+1}^{\nu}(K)_{l-\mu})\bmod 2=\k$ as in Lemma \ref{l:prodtrennen}, then the geometric Fourier transform of $\A$ satisfies the convolution property
\begin{equation}\begin{aligned}
&\F_{F_1,F_2}(\A)(\u)
\\&=\sum_{\j,\j^\prime,\k,\k^\prime}\sum_{J,K}
(\F_{F_1(\j),F_2(\k+\k^\prime)}(\vec{C})(\u))_{\vec c^{\j^\prime}(F_1)}
\F_{(F_1(\j^\prime))_{\vec c^{J}},(F_2)_{\vec c^{K}}}(\B_{\vec c^{\k^\prime}(F_2)})(\u).
\end{aligned}\end{equation}
\end{thm}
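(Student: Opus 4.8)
### Proof Plan

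The plan is to start from the definition of $\F_{F_1,F_2}(\A)(\u)$ with $\A = \vec{C} * \B$, substitute the convolution integral, and massage the triple integral into the claimed form by (i) using the shift/linearity structure of the $f_l$ to split the exponentials at $\x = \y + (\x-\y)$, (ii) moving the pieces carrying the $\y$-dependence past $\B(\x-\y)$ and the pieces carrying the $(\x-\y)$-dependence past $\vec{C}(\y)$, and (iii) recognizing the two resulting factors as a (trigonometrically decomposed) GFT of $\vec{C}$ and a GTT of a decomposed $\B$. Concretely, I would first write
\begin{equation}\begin{aligned}
\F_{F_1,F_2}(\A)(\u) = \int_{\R^m}\int_{\R^m}\prod_{f\in F_1}e^{-f(\x,\u)}\,\vec{C}(\y)\B(\x-\y)\prod_{f\in F_2}e^{-f(\x,\u)}\d^m\y\,\d^m\x,
\end{aligned}\end{equation}
substitute $\z = \x - \y$ (so $\x = \y + \z$, $\d^m\x = \d^m\z$), and use that each $f_l$ is linear in its first argument to get $f_l(\y+\z,\u) = f_l(\y,\u) + f_l(\z,\u)$, hence $e^{-f_l(\y+\z,\u)}$ must be split — but since the $f_l(\y,\u)$ and $f_l(\z,\u)$ need not commute, this is exactly where Lemma \ref{l:prodtrennen} is needed: it expresses $\prod_l e^{-f_l(\y+\z,\u)}$ (for the $F_1$-block, with strictly lower triangular $J$) as a sum over $\j$ and $J$ of $\prod_l e^{-f_l(\y,\u)}_{\vec c^{(J)_l}}$ times $\prod_l e^{-(-1)^{j_l}f_l(\z,\u)}$, and dually for the $F_2$-block with strictly upper triangular $K$ and indices $\k$.

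Next I would reorganize so that all $\y$-integration sits together: after the split, the integrand has the shape $\prod_{l\le\mu}e^{-f_l(\y,\u)}_{\vec c^{(J)_l}}\,\big[\prod_{l\le\mu}e^{-(-1)^{j_l}f_l(\z,\u)}\big]\,\vec{C}(\y)\,\B(\z)\,\big[\prod_{l>\mu}e^{-(-1)^{k_l}f_l(\z,\u)}\big]\,\prod_{l>\mu}e^{-f_l(\z,\u)}_{\vec c^{(K)_{l-\mu}}}$. The pieces depending on $\z$ that are sandwiched between $\vec{C}(\y)$ and the left exponentials must be commuted leftward past $\vec{C}(\y)$; this is done with Lemma \ref{l:expcom} (or Lemma \ref{l:expcom2}), which at the cost of introducing a sum over $\j'$ replaces $\big[\prod_{l\le\mu}e^{-(-1)^{j_l}f_l(\z,\u)}\big]\vec{C}(\y)$ by $\sum_{\j'}\vec{C}(\y)_{\vec c^{\j'}(F_1)}\prod_{l\le\mu}e^{-(-1)^{j_l+j'_l}f_l(\z,\u)}$ — and since $\vec{C}$ does not depend on $\z$, the decomposition $\vec{C}_{\vec c^{\j'}}$ commutes with the $\y$-integration. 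Symmetrically, the $\z$-exponentials to the right of $\B(\z)$ coming from the $F_2$-block can be pushed to act on $\B$, producing $\B_{\vec c^{\k'}(F_2)}$ and shifting $\k$ to $\k+\k'$ on the $\vec{C}$-side. Collecting the $\y$-integral now gives $\int \prod_{l\le\mu}e^{-f_l(\y,\u)}_{\vec c^{(J)_l}}\,\vec{C}(\y)_{\vec c^{\j'}}\,\d^m\y$-type terms which, once the indices line up, is precisely $(\F_{F_1(\j),F_2(\k+\k')}(\vec{C})(\u))_{\vec c^{\j'}(F_1)}$ after one more application of Lemma \ref{l:expcom2} to pull $\vec{C}_{\vec c^{\j'}}$ back out; the remaining $\z$-integral assembles into $\F_{(F_1(\j'))_{\vec c^J},(F_2)_{\vec c^K}}(\B_{\vec c^{\k'}(F_2)})(\u)$.

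The main obstacle is bookkeeping: keeping straight which block each exponential belongs to, whether the relevant triangular matrix is lower or upper, and which multi-index ($\j$, $\j'$, $\k$, $\k'$, or a row of $J$ or $K$) controls a given $\vec c$-decomposition after each commutation. In particular one must check carefully that the sign flips $(-1)^{j_l}$ accumulated from Lemma \ref{l:prodtrennen} combine correctly with those from Lemma \ref{l:expcom}/\ref{l:expcom2} so that the arguments of the exponentials end up as $f_l(\j)$ on the $\vec C$-factor and as the triangular-decomposed $f_l(\j')$ on the $\B$-factor, and that the $\vec c^{\j'}(F_1)$ decomposition correctly ends up on the outside of the $\vec C$-GFT rather than tangled with the $\y$-integrand. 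There is also a mild analytic point — interchanging the order of the $\y$- and $\z$-integrations and the finite sums — which is justified by absolute integrability exactly as the existence hypotheses for the GFT in \cite{BSH11} require; I would note this but not dwell on it. Once the index conventions are fixed to match those in the statement (the strictly lower/upper triangularity of $J,K$ and the row-sum conditions $\sum(J)_l \bmod 2 = \j$, $\sum(K)_{l-\mu}\bmod 2 = \k$), the identity falls out by direct comparison.
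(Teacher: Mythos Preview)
Your overall strategy is exactly the paper's: expand the convolution, substitute $\z=\x-\y$, split each $e^{-f_l(\y+\z,\u)}$ via Lemma~\ref{l:prodtrennen}, commute the pieces past $\vec C$ and $\B$ with Lemmata~\ref{l:expcom}/\ref{l:expcom2}, and reassemble. But there is a concrete misstep in how you apply Lemma~\ref{l:prodtrennen} to the $F_1$-block that makes your $\y$-integral fail to match the theorem's right-hand side.

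You expand the left product as $\sum_{\j,J}\prod_{l\le\mu} e^{-f_l(\y,\u)}_{\vec c^{(J)_l}}\prod_{l\le\mu} e^{-(-1)^{j_l}f_l(\z,\u)}$, i.e.\ you put the triangular $\vec c^{(J)_l}$-decomposition on the $\y$-exponentials and the sign flips on the $\z$-exponentials. After your further commutations the $\y$-integral is then
\[
\int_{\R^m}\prod_{l\le\mu} e^{-f_l(\y,\u)}_{\vec c^{(J)_l}}\,\vec C(\y)_{\vec c^{\j'}(F_1)}\,\prod_{l>\mu} e^{-(-1)^{k_{l-\mu}+k'_{l-\mu}}f_l(\y,\u)}\d^m\y,
\]
which is $\F_{(F_1)_{\vec c^{J}},\,F_2(\k+\k')}(\vec C_{\vec c^{\j'}(F_1)})(\u)$, a GTT applied to a decomposed $\vec C$. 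This is \emph{not} equal to $(\F_{F_1(\j),F_2(\k+\k')}(\vec C)(\u))_{\vec c^{\j'}(F_1)}$, and no ``one more application of Lemma~\ref{l:expcom2}'' will convert one into the other: the decomposed exponentials $e^{-f_l}_{\vec c^{(J)_l}}$ are genuinely different objects from the sign-flipped full exponentials $e^{-(-1)^{j_l}f_l}$. Your route actually produces one of the alternative formulae recorded in the Remark following the theorem, not the stated one.

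The fix is to swap the roles of $\y$ and $\z$ when you invoke (\ref{prodtrennen1}) on $F_1$: write $\prod_{l\le\mu} e^{-f_l(\z+\y,\u)}=\sum_{\j,J}\prod_{l\le\mu} e^{-f_l(\z,\u)}_{\vec c^{(J)_l}}\prod_{l\le\mu} e^{-(-1)^{j_l}f_l(\y,\u)}$, so that the sign-flipped (full) exponentials sit next to $\vec C(\y)$ and the triangular decompositions sit on the $\z$-exponentials. Then, after moving the right-hand $\y$-exponentials past $\B(\z)$ (Lemma~\ref{l:expcom}, producing $\B_{\vec c^{\k'}(F_2)}$), the $\y$-integral is a genuine GFT $\F_{F_1(\j),F_2(\k+\k')}(\vec C)(\u)$. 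Only afterwards do you commute the far-left $\z$-decomposed product past this GFT using Lemma~\ref{l:expcom2}; that step is what produces the \emph{external} decomposition $(\,\cdot\,)_{\vec c^{\j'}(F_1)}$ and simultaneously the sign flips $F_1(\j')$ inside the remaining $\z$-integral. (Two minor slips: in your displayed integrand both $F_2$-products carry $\z$ where the inner one should carry $\y$, and the exponentials you need to push past $\B(\z)$ are the $\y$-dependent ones, not the $\z$-dependent ones.)
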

\begin{proof}
 \begin{equation}\begin{aligned}
 &\F_{F_1,F_2}(\A)(\u)=\int_{\R^m}\prod_{f\in F_1}e^{-f(\x,\u)}(\vec{C}*\B)(\x)\prod_{f\in F_2}e^{-f(\x,\u)}\d^m \x\\
&=\int_{\R^m}\prod_{f\in F_1}e^{-f(\x,\u)}\int_{\R^m}\vec{C}(\y)\B(\x-\y)\d^m\y \prod_{f\in F_2}e^{-f(\x,\u)}\d^m \x\\
&\overset{\x-\y=\z}=\int_{\R^m}\int_{\R^m}\prod_{f\in F_1}e^{-f(\z+\y,\u)}\vec{C}(\y)\B(\z) \prod_{f\in F_2}e^{-f(\z+\y,\u)}\d^m\y\d^m \z\\
\end{aligned}\end{equation}
We separate the products into parts that only depend on $\y$ and ones that only depend on $\z$.
 \begin{equation}\begin{aligned}
\overset{\text{Lem. }\ref{l:prodtrennen}}=&
\int_{\R^m}\int_{\R^m}
\sum_{\j\in\{0,1\}^{\mu}}\sum_{\stackrel{J\in\{0,1\}^{\mu\times \mu}}{\sum(J)_l\bmod 2=\j}}\prod_{l=1}^{\mu} e^{-f_l(\z,\u)}_{\vec c^{(J)_l}(F_1)}
\\&\prod_{l=1}^{\mu}e^{-(-1)^{j_l}f_l(\y,\u)}\vec{C}(\y)\B(\z) 
\sum_{\k\in\{0,1\}^{\nu-\mu}}\sum_{\stackrel{K\in\{0,1\}^{\nu-\mu\times \nu-\mu}}{\sum(K)_l\bmod 2=\j}}
\\&\prod_{l=\mu+1}^{\nu}e^{-(-1)^{k_{l-\mu}}f_l(\y,\u)}\prod_{l=\mu+1}^{\nu}e^{-f_l(\z,\u)}_{\vec c^{(K)_{l-\mu}}(F_2)}
\d^m\y\d^m \z\\
=&\sum_{\j,\k}\sum_{J,K}\int_{\R^m}\int_{\R^m}
\prod_{l=1}^{\mu} e^{-f_l(\z,\u)}_{\vec c^{(J)_l}(F_1)}\prod_{l=1}^{\mu}e^{-(-1)^{j_l}f_l(\y,\u)}
\vec{C}(\y) \\&
\B(\z)
\prod_{l=\mu+1}^{\nu}e^{-(-1)^{k_{l-\mu}}f_l(\y,\u)}\prod_{l=\mu+1}^{\nu}e^{-f_l(\z,\u)}_{\vec c^{(K)_{l-\mu}}(F_2)}
\d^m\y\d^m \z\\
 \end{aligned}\end{equation}
Next step is to collect all parts that depend on $\y$.
 \begin{equation}\begin{aligned}
\overset{\text{Lem. }\ref{l:expcom}}=&\sum_{\j,\k}\sum_{J,K}\int_{\R^m}\int_{\R^m}
\prod_{l=1}^{\mu} e^{-f_l(\z,\u)}_{\vec c^{(J)_l}(F_1)}\prod_{l=1}^{\mu}e^{-(-1)^{j_l}f_l(\y,\u)}
\vec{C}(\y)
\\&
\sum_{\k^\prime\in\{0,1\}^{\nu-\mu}}
\prod_{l=\mu+1}^{\nu}e^{-(-1)^{k_{l-\mu}+k^\prime_{l-\mu}}f_l(\y,\u)}\B_{\vec c^{\k^\prime}(F_2)}(\z)
\\&\prod_{l=\mu+1}^{\nu}e^{-f_l(\z,\u)}_{\vec c^{(K)_{l-\mu}}(F_2)}
\d^m\y\d^m \z\\
=&\sum_{\j,\k,\k^\prime}\sum_{J,K}\int_{\R^m}
\prod_{l=1}^{\mu} e^{-f_l(\z,\u)}_{\vec c^{(J)_l}(F_1)}\int_{\R^m}\prod_{l=1}^{\mu}e^{-(-1)^{j_l}f_l(\y,\u)}
\vec{C}(\y)\\&
\prod_{l=\mu+1}^{\nu}e^{-(-1)^{k_{l-\mu}+k^\prime_{l-\mu}}f_l(\y,\u)}\d^m\y
\B_{\vec c^{\k^\prime}(F_2)}(\z)
\\& \prod_{l=\mu+1}^{\nu}e^{-f_l(\z,\u)}_{\vec c^{(K)_{l-\mu}}(F_2)}
\d^m \z\\
=&\sum_{\j,\k,\k^\prime}\sum_{J,K}\int_{\R^m}
\prod_{l=1}^{\mu} e^{-f_l(\z,\u)}_{\vec c^{(J)_l}(F_1)}
\F_{F_1(\j),F_2(\k+\k^\prime)}(\vec{C})(\u)
\B_{\vec c^{\k^\prime}(F_2)}(\z)
\\&\prod_{l=\mu+1}^{\nu}e^{-f_l(\z,\u)}_{\vec c^{(K)_{l-\mu}}(F_2)}
\d^m \z\\
 \end{aligned}\end{equation}
Finally we gather the parts depending on $\z$.
 \begin{equation}\begin{aligned}
\overset{\text{Lem. }\ref{l:expcom2}}=&\sum_{\j,\k,\k^\prime}\sum_{J,K}\int_{\R^m}
\sum_{\j^\prime\in\{0,1\}^{\mu}}(\F_{F_1(\j),F_2(\k+\k^\prime)}(\vec{C})(\u))_{\vec c^{\j^\prime}(F_1)}
\\&\prod_{l=1}^{\mu} e^{-(-1)^{j^\prime_l}f_l(\z,\u)}_{\vec c^{(J)_l}(F_1)}
\B_{\vec c^{\k^\prime}(F_2)}(\z)
\prod_{l=\mu+1}^{\nu}e^{-f_l(\z,\u)}_{\vec c^{(K)_{l-\mu}}(F_2)}
\d^m \z\\
=&\sum_{\j,\j^\prime,\k,\k^\prime}\sum_{J,K}
(\F_{F_1(\j),F_2(\k+\k^\prime)}(\vec{C})(\u))_{\vec c^{\j^\prime}(F_1)}\\&
\int_{\R^m}\prod_{l=1}^{\mu} e^{-(-1)^{j^\prime_l}f_l(\z,\u)}_{\vec c^{(J)_l}(F_1)}
\B_{\vec c^{\k^\prime}(F_2)}(\z)
\prod_{l=\mu+1}^{\nu}e^{-f_l(\z,\u)}_{\vec c^{(K)_{l-\mu}}(F_2)}
\d^m \z\\
=&\sum_{\j,\j^\prime,\k,\k^\prime}\sum_{J,K}
(\F_{F_1(\j),F_2(\k+\k^\prime)}(\vec{C})(\u))_{\vec c^{\j^\prime}(F_1)}
\\&\F_{(F_1(\j^\prime))_{\vec c^{J}},(F_2)_{\vec c^{K}}}(\B_{\vec c^{\k^\prime}(F_2)})(\u)
\end{aligned}\end{equation}
\end{proof}
\begin{rem}
The formula in the convolution theorem can take various other shapes depending on the way Lemma \ref{l:prodtrennen} is applied. In Theorem \ref{t:convolution} we used Lemma \ref{l:prodtrennen} in its first version (\ref{prodtrennen1}) on $F_1$ and in its second version (\ref{prodtrennen2}) on $F_2$. This has the advantage that the GTT is needed only on one side. We get the same effect with its second version (\ref{prodtrennen2}) on $F_1$ and its first version (\ref{prodtrennen2}) on $F_2$ by
 \begin{equation}\begin{aligned}
&\F_{F_1,F_2}(\A)(\u)
\\&=\sum_{\j,\j^\prime,\k,\k^\prime}\sum_{J,K}
(\F_{(F_1)_{\vec c^{J}},(F_2(\k^\prime))_{\vec c^{K}}}(\vec{C})(\u))_{\vec c^{\j^\prime}(F_1)}
\F_{F_1(\j+\j^\prime),F_2(\k)}(\B_{\vec c^{\k^\prime}(F_2)})(\u).
\end{aligned}\end{equation}
Using the first version twice leads to
 \begin{equation}\begin{aligned}
&\F_{F_1,F_2}(\A)(\u)
\\&=\sum_{\j,\j^\prime,\k,\k^\prime}\sum_{J,K}
(\F_{F_1(\j),(F_2(\k^\prime))_{\vec c^K}}(\vec{C})(\u))_{\vec c^{\j^\prime}(F_1)}
\F_{(F_1(\j^\prime))_{\vec c^J},F_2(\k)}(\B_{\vec c^{\k^\prime}(F_2)})(\u),
\end{aligned}\end{equation}
and using the second twice to
 \begin{equation}\begin{aligned}
&\F_{F_1,F_2}(\A)(\u)
\\&=\sum_{\j,\j^\prime,\k,\k^\prime}\sum_{J,K}
(\F_{(F_1)_{\vec c^{J}},F_2(\k+\k^\prime)}(\vec{C})(\u))_{\vec c^{\j^\prime}(F_1)}
\F_{F_1(\j+\j^\prime),(F_2)_{\vec c^{K}}}(\B_{\vec c^{\k^\prime}(F_2)})(\u).
\end{aligned}\end{equation}
These versions have the advantage of being a bit more symmetric. During the proof of Theorem \ref{t:convolution} we started recomposing the transform around $\vec C$. Each of the four formulae has a counterpart that is constructed by restructuring around $\B$ first. Listed in the analog order they take the shapes
\begin{equation}\begin{aligned}
&\F_{F_1,F_2}(\A)(\u)
\\&=\sum_{\j,\j^\prime,\k,\k^\prime}\sum_{J,K}
\F_{F_1(\j),F_2(\k+\k^\prime)}(\vec{C}_{\vec c^{\j^\prime}(F_1)})(\u)
(\F_{(F_1(\j^\prime))_{\vec c^{J}},(F_2)_{\vec c^{K}}}(\B)(\u))_{\vec c^{\k^\prime}(F_2)},\\
&\F_{F_1,F_2}(\A)(\u)
\\&=\sum_{\j,\j^\prime,\k,\k^\prime}\sum_{J,K}
\F_{(F_1)_{\vec c^{J}},(F_2(\k^\prime))_{\vec c^{K}}}(\vec{C}_{\vec c^{\j^\prime}(F_1)})(\u)
(\F_{F_1(\j+\j^\prime),F_2(\k)}(\B)(\u))_{\vec c^{\k^\prime}(F_2)},\\
&\F_{F_1,F_2}(\A)(\u)
\\&=\sum_{\j,\j^\prime,\k,\k^\prime}\sum_{J,K}
\F_{F_1(\j),(F_2(\k^\prime))_{\vec c^K}}(\vec{C}_{\vec c^{\j^\prime}(F_1)})(\u)
(\F_{(F_1(\j^\prime))_{\vec c^J},F_2(\k)}(\B)(\u))_{\vec c^{\k^\prime}(F_2)},\\
&\F_{F_1,F_2}(\A)(\u)
\\&=\sum_{\j,\j^\prime,\k,\k^\prime}\sum_{J,K}
\F_{(F_1)_{\vec c^{J}},F_2(\k+\k^\prime)}(\vec{C}_{\vec c^{\j^\prime}(F_1)})(\u)
(\F_{F_1(\j+\j^\prime),(F_2)_{\vec c^{K}}}(\B)(\u))_{\vec c^{\k^\prime}(F_2)}.
\end{aligned}\end{equation}
Depending on the application one or some of these might be preferred compared to the others, because of savings in memory or runtime.
\end{rem}
\begin{cor}[convolution]\label{k:convolution}
 Let $\A,\B,\vec C:\R^m\to\G^{p,q}$ be multivector fields with $\A(\x)=(\vec{C}*\B)(\x)$ and $F_1,F_2$ each consist of mutually commutative functions\footnote{Cross commutativity is not necessary.}, being separable and linear with respect to the first argument and $\j^\prime\in\{0,1\}^{\mu},\k^\prime\in\{0,1\}^{(\nu-\mu)}$ multi-indices, then the geometric Fourier transforms satisfy the convolution property
\begin{equation}\begin{aligned}
 \F_{F_1,F_2}(\A)(\u)=&\sum_{\j^\prime,\k^\prime}(\F_{F_1,F_2(\k^\prime)}(\vec{C})(\u))_{\vec c^{\j^\prime}(F_1)}
\F_{F_1(\j^\prime)),F_2}(\B_{\vec c^{\k^\prime}(F_2)})(\u)
\end{aligned}\end{equation}
or
\begin{equation}\begin{aligned}
 \F_{F_1,F_2}(\A)(\u)=&\sum_{\j^\prime,\k^\prime}\F_{F_1,F_2(\k^\prime)}(\vec{C}_{\vec c^{\j^\prime}(F_1)})(\u),
(\F_{F_1(\j^\prime)),F_2}(\B)(\u))_{\vec c^{\k^\prime}(F_2)}.
\end{aligned}\end{equation}
If the values of the functions in $F_1$ and $F_2$ are in the center of $\G^{p,q}$ it even satisfies the simple product formula
\begin{equation}\begin{aligned}
 \F_{F_1,F_2}(\A)(\u)=&\F_{F_1,F_2}(\vec{C})(\u))\F_{F_1,F_2}(\B)(\u).
\end{aligned}\end{equation}
\end{cor}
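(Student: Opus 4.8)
The plan is to retrace the computation in the proof of Theorem~\ref{t:convolution} and to observe that mutual commutativity inside $F_1$ and inside $F_2$ makes the sum over the triangular matrices collapse to a single term. In that proof the matrices $J,K$ enter only through the partial exponentials $e^{-f_l(\z,\u)}_{\vec c^{(J)_l}(F_1)}$ and $e^{-f_l(\z,\u)}_{\vec c^{(K)_{l-\mu}}(F_2)}$ produced by Lemma~\ref{l:prodtrennen}. Since each $f_l\in F_1$ commutes with every other element of $F_1$, the exponential $e^{-f_l(\z,\u)}$ is conjugation-invariant under every generator occurring in the decomposition $\vec c^{(J)_l}(F_1)$, hence it lies entirely in the $\vec c^{0}$-part and therefore
\begin{equation*}
e^{-f_l(\z,\u)}_{\vec c^{(J)_l}(F_1)}=\begin{cases}e^{-f_l(\z,\u)}&\text{if }(J)_l=0,\\[2pt]0&\text{otherwise},\end{cases}
\end{equation*}
with the analogous statement for $F_2$. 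As $J$ is strictly lower and $K$ strictly upper triangular, the only surviving choices are $J=0$ and $K=0$, and since $\j$ and $\k$ are the mod-$2$ sums of the rows of $J$ and $K$ this forces $\j=0$ and $\k=0$ in the sums of Theorem~\ref{t:convolution}.

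Substituting $\j=\k=0$ and $J=K=0$ into the formula of Theorem~\ref{t:convolution}, every remaining partial exponential is a full exponential again, so the geometric trigonometric transform $\F_{(F_1(\j^\prime))_{\vec c^{J}},(F_2)_{\vec c^{K}}}$ reduces to the ordinary transform $\F_{F_1(\j^\prime),F_2}$; together with $F_2(\k+\k^\prime)=F_2(\k^\prime)$ and $F_1(0)=F_1$ this yields the first asserted identity. For the second identity I would start instead from the variant of the convolution theorem listed in the remark following Theorem~\ref{t:convolution} that is restructured around $\B$ first, and apply the identical reduction $J=K=0$, $\j=\k=0$. For the simple product formula I would add the hypothesis that the values of the $f_l$ lie in the center of $\G^{p,q}$: then conjugation by any $f_l$ is the identity, hence $\A_{\vec c^{0}(F_1)}=\A$ while $\A_{\vec c^{\j^\prime}(F_1)}=0$ whenever $\j^\prime\neq 0$, and likewise for $F_2$; so only the term $\j^\prime=\k^\prime=0$ remains in either general formula, and since $F_1(0)=F_1$, $F_2(0)=F_2$ this collapses to $\F_{F_1,F_2}(\A)(\u)=\F_{F_1,F_2}(\vec C)(\u)\,\F_{F_1,F_2}(\B)(\u)$.

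The step I expect to need the most care is checking that the hypothesis here---commutativity only \emph{within} $F_1$ and within $F_2$, in particular no cross-commutativity and not the joint coorthogonality assumed in Theorem~\ref{t:convolution}---is genuinely enough to run the argument. One has to go through the chain lemma by lemma: Lemma~\ref{l:prodtrennen} needs only separability and linearity in the first argument, Lemma~\ref{l:expcom} only pointwise invertibility, and the one place where the proof of Theorem~\ref{t:convolution} genuinely invokes coorthogonality---the appeal to Lemma~\ref{l:expcom2} with a general triangular matrix---is harmless here because $J=0$, so Lemma~\ref{l:expcom2} degenerates to Lemma~\ref{l:expcom}. A second, subtler point is to keep straight that $\vec c^{0}(F_1)$ acts as the identity on the exponentials $e^{-f_l(\z,\u)}$ (which is all that is required for the first two formulas, since those exponentials are built from $F_1$ itself), but acts as the identity on an \emph{arbitrary} multivector only under the stronger centrality assumption; this is precisely the line separating the two general convolution identities from the product formula.
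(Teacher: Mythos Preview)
Your proposal is correct and follows exactly the route the paper intends: the corollary is stated without proof as a direct specialization of Theorem~\ref{t:convolution} and its Remark, and the mechanism is precisely your observation that within-set commutativity makes every partial exponential $e^{-f_l}_{\vec c^{(J)_l}}$ vanish unless $(J)_l=0$, collapsing the sums to $J=K=0$ and $\j=\k=0$, after which the GTT reduces to an ordinary GFT and the centrality argument for the product formula is immediate. One small clarification on your final paragraph: coorthogonality of all of $F_1\cup F_2$ is a standing assumption throughout the paper (Notation~\ref{b:coorth}), so it is still in force in the corollary and you need not re-justify the chain of lemmata without it---your verification that the argument in fact runs without full coorthogonality is a nice bonus, not a requirement.
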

\begin{ex}
We summarize the exact shape of the convolution of multivector fields under the transforms from the first Example 
using the same order.
\begin{enumerate}
\item The Clifford Fourier transform from \cite{Janc90,Ebl06,HM08} takes the form
\begin{equation}\begin{aligned}
\F_{f_1}(\A)=\F_{f_1}(\vec C_{\vec c^{0}(i)})\F_{f_1}(\B)+\F_{-f_1}(\vec C_{\vec c^{1}(i)})\F_{f_1}(\B)
\end{aligned}\end{equation}
for $n=2\pmod4$ and for $n=3\pmod4$ the even simpler one
\begin{equation}\begin{aligned}
\F_{f_1}(\A)=\F_{f_1}(\vec C)\F_{f_1}(\B)
\end{aligned}\end{equation}
because in this case the pseudoscalar is in the center of $\G^{n,0}$. 
\item The Sommen B\"ulow Clifford Fourier transform \cite{Som82,Bue99} is the only one of our examples that does not fulfill the constraints of Corollary \ref{k:convolution} but the ones of Theorem \ref{t:convolution}
\begin{equation}\begin{aligned}
\F_{f_1,...,f_n}(\A)=&\sum_{\k,\k^\prime\in\{0,1\}^{n}}\sum_{K}
\F_{(-1)^{k_1+k^\prime_1}f_1,...,(-1)^{k_n+k^\prime_n}f_n}(\vec{C})\\
&\F_{(f_1,...,f_n)_{\vec c^{K}}}(\B_{\vec c^{\k^\prime}(f_1,...,f_n)})
\end{aligned}\end{equation}
with strictly upper triangular matrices in $\{0,1\}^{n \times n}$ with rows $(K)_{l-\mu}$ summing up to $(\sum_{l=\mu+1}^{\nu}(K)_{l-\mu})\bmod 2=\k$.
\item The quaternionic Fourier transform \cite{Ell93,Bue99} has the shape
\begin{equation}\begin{aligned}
\F_{f_1,f_2}(\A)=& (\F_{f_1,f_2}(\vec{C}))_{\vec c^0(f_1)}\F_{f_1,f_2}(\vec B_{\vec c^0(f_2)})
\\&+(\F_{f_1,f_2}(\vec{C}))_{\vec c^1(f_1)}\F_{-f_1,f_2}(\vec B_{\vec c^0(f_2)}).
\end{aligned}\end{equation}
\item And the spacetime Fourier transform \cite{Hitz07} has exactly the same shape
\begin{equation}\begin{aligned}
\F_{f_1,f_2}(\A)=& (\F_{f_1,f_2}(\vec{C}))_{\vec c^0(f_1)}\F_{f_1,f_2}(\vec B_{\vec c^0(f_2)})
\\&+(\F_{f_1,f_2}(\vec{C}))_{\vec c^1(f_1)}\F_{-f_1,f_2}(\vec B_{\vec c^0(f_2)}).
\end{aligned}\end{equation}
\item The Clifford Fourier transform for color images \cite{BBS08} takes the rather long form
\begin{equation}\begin{aligned}
\F_{f_1,f_2,f_3,f_4}(\A)=&(\F_{f_1,f_2,f_3,f_4}(\vec{C}))_{\vec c^{00}(f_1,f_2)}\F_{f_1,f_2,f_3,f_4}(\vec B_{\vec c^{00}(f_3,f_4)}) 
\\ &+(\F_{f_1,f_2,f_3,f_4}(\vec{C}))_{\vec c^{01}(f_1,f_2)}\F_{f_1,-f_2,f_3,f_4}(\vec B_{\vec c^{00}(f_3,f_4)}) 
\\ & +(\F_{f_1,f_2,f_3,f_4}(\vec{C}))_{\vec c^{10}(f_1,f_2)}\F_{-f_1,f_2,f_3,f_4}(\vec B_{\vec c^{00}(f_3,f_4)}) 
\\ & +(\F_{f_1,f_2,f_3,f_4}(\vec{C}))_{\vec c^{11}(f_1,f_2)}\F_{-f_1,-f_2,f_3,f_4}(\vec B_{\vec c^{00}(f_3,f_4)}) 
\\ & +(\F_{f_1,f_2,f_3,-f_4}(\vec{C}))_{\vec c^{00}(f_1,f_2)}\F_{f_1,f_2,f_3,f_4}(\vec B_{\vec c^{01}(f_3,f_4)}) 
\\ & +(\F_{f_1,f_2,f_3,-f_4}(\vec{C}))_{\vec c^{01}(f_1,f_2)}\F_{f_1,-f_2,f_3,f_4}(\vec B_{\vec c^{01}(f_3,f_4)}) 
\\ & +(\F_{f_1,f_2,f_3,-f_4}(\vec{C}))_{\vec c^{10}(f_1,f_2)}\F_{-f_1,f_2,f_3,f_4}(\vec B_{\vec c^{01}(f_3,f_4)}) 
\\ & +(\F_{f_1,f_2,f_3,-f_4}(\vec{C}))_{\vec c^{11}(f_1,f_2)}\F_{-f_1,-f_2,f_3,f_4}(\vec B_{\vec c^{01}(f_3,f_4)}) 
\\ & +(\F_{f_1,f_2,-f_3,f_4}(\vec{C}))_{\vec c^{00}(f_1,f_2)}\F_{f_1,f_2,f_3,f_4}(\vec B_{\vec c^{10}(f_3,f_4)}) 
\\ & +(\F_{f_1,f_2,-f_3,f_4}(\vec{C}))_{\vec c^{01}(f_1,f_2)}\F_{f_1,-f_2,f_3,f_4}(\vec B_{\vec c^{10}(f_3,f_4)}) 
\\ & +(\F_{f_1,f_2,-f_3,f_4}(\vec{C}))_{\vec c^{10}(f_1,f_2)}\F_{-f_1,f_2,f_3,f_4}(\vec B_{\vec c^{10}(f_3,f_4)}) 
\\ & +(\F_{f_1,f_2,-f_3,f_4}(\vec{C}))_{\vec c^{11}(f_1,f_2)}\F_{-f_1,-f_2,f_3,f_4}(\vec B_{\vec c^{10}(f_3,f_4)}) 
\\ & +(\F_{f_1,f_2,-f_3,-f_4}(\vec{C}))_{\vec c^{00}(f_1,f_2)}\F_{f_1,f_2,f_3,f_4}(\vec B_{\vec c^{11}(f_3,f_4)}) 
\\ & +(\F_{f_1,f_2,-f_3,-f_4}(\vec{C}))_{\vec c^{01}(f_1,f_2)}\F_{f_1,-f_2,f_3,f_4}(\vec B_{\vec c^{1}(f_3,f_4)}) 
\\ & +(\F_{f_1,f_2,-f_3,-f_4}(\vec{C}))_{\vec c^{10}(f_1,f_2)}\F_{-f_1,f_2,f_3,f_4}(\vec B_{\vec c^{11}(f_3,f_4)}) 
\\ & +(\F_{f_1,f_2,-f_3,-f_4}(\vec{C}))_{\vec c^{11}(f_1,f_2)}\F_{-f_1,-f_2,f_3,f_4}(\vec B_{\vec c^{11}(f_3,f_4)}). 
\end{aligned}\end{equation}
\item The cylindrical Fourier transform \cite{BSS10} is not separable except for the case $n=2$. Here the convolution corollary holds
\begin{equation}\begin{aligned}
\F_{f_1}(\vec C_{\vec c^{0}(f_1)})\F_{f_1}(\B)+\F_{-f_1}(\vec C_{\vec c^{1}(f_1)})\F_{f_1}(\B),
\end{aligned}\end{equation}
but for all other no closed formula can be constructed in a similar way.
\end{enumerate}
\end{ex}
\section{Conclusions and Outlook}
%
In this paper we introduced the concept of coorthogonality as the property of commutation or anticommutation of blades. We proved that it is equivalent to the claim for blades to be real multiples of basis blades for an orthonormal basis and presented an algorithm to compute this basis.
\par
The Lemmata \ref{l:prodviele}, \ref{l:expcom} and \ref{l:prodtrennen}, that were primarily stated and proved in \cite{BSH11} about multiplication with invertible factors, become simplified for coorthogonal blades. We saw that in this case the partition of the multivector $\A$ takes place along the basis blades and that it is independent from the relative order of the factors it is exchanged with. A consequence of this is, that every exponential can only have four simple predictable shapes after decomposition: itself, a sine, a cosine or zero. That fact inspired the definition of the geometric trigonometric transform, whose properties will be studied in a future paper.
\par
By means of the GTT we were able to prove a convolution theorem (Theorem \ref{t:convolution}) for the general geometric Fourier transform introduced in \cite{BSH11}. It highlights the rich consequences of the geometric structure created by utilizing general geometric square roots of minus one in sets $F_1,F_2$. The information contained in the multivector fields, appears now finely segmented and related term by term. The choice of $F_1,F_2$ determines this segmentation. Because convolution appears in wavelet theory and is closely related to correlation, the GTF convolution theorem may have interesting consequences for multidimensional geometric pattern matching and neural network type learning algorithms as well as for geometric algebra wavelet theory.
%
%
 
 \addcontentsline{toc}{section}{References}
  \bibliographystyle{unsrt} 
  \bibliography{./../Literaturverzeichnis}

\begin{thebibliography}{10}

\bibitem{C1878}
William~Kingdon Clifford.
\newblock {Applications of Grassmann's Extensive Algebra}.
\newblock {\em American Journal of Mathematics}, 1(4):350--358, 1878.

\bibitem{HA10}
Eckhard Hitzer and Rafal Ablamowicz.
\newblock {Geometric Roots of $-1$ in Clifford Algebras $Cl_{p,q}$ with
  $p+q\leq4$}.
\newblock {\em Advances in Applied Clifford Algebras}, 21(1):121--144, 2011.

\bibitem{HHA11}
Eckhard Hitzer, Jacques Helmstetter, and Rafal Ablamowicz.
\newblock {Square Roots of -1 in Real Clifford Algebras}.
\newblock In K.~G\"urlebeck, editor, {\em Proceedings of the 9th International
  Conference on Clifford Algebras and their Applications}, Bauhaus-University
  Weimar, Germany, 2011.

\bibitem{Janc90}
Bernard Jancewicz.
\newblock Trivector fourier transformation and electromagnetic field.
\newblock {\em Journal of Mathematical Physics}, 31(8):1847--1852, 1990.

\bibitem{Ebl06}
Julia Ebling.
\newblock {\em {Visualization and Analysis of Flow Fields using Clifford
  Convolution}}.
\newblock PhD thesis, University of Leipzig, Germany, 2006.

\bibitem{HM08}
Eckhard Hitzer and Bahri Mawardi.
\newblock {Clifford Fourier Transform on Multivector Fields and Uncertainty
  Principles for Dimensions $n=2\pmod 4$ and $n=3\pmod 4$}.
\newblock {\em Advances in Applied Clifford Algebras}, 18(3):715--736, 2008.

\bibitem{Som82}
Frank Sommen.
\newblock {Hypercomplex Fourier and Laplace Transforms I}.
\newblock {\em Illinois Journal of Mathematics}, 26(2):332--352, 1982.

\bibitem{Bue99}
Thomas B\"ulow.
\newblock {\em {Hypercomplex Spectral Signal Representations for Image
  Processing and Analysis}}.
\newblock Inst. f. Informatik u. Prakt. Math. der
  Christian-Albrechts-Universit\"at zu Kiel, 1999.

\bibitem{Ell93}
Todd~A. Ell.
\newblock {Quaternion-Fourier Transforms for Analysis of Two-Dimensional Linear
  Time-Invariant Partial Differential Systems}.
\newblock In {\em Proceedings of the 32nd IEEE Conference on Decision and
  Control}, volume~2, pages 1830--1841, San Antonio, TX , USA, 1993.

\bibitem{Hitz07}
Eckhard Hitzer.
\newblock Quaternion fourier transform on quaternion fields and
  generalizations.
\newblock {\em Advances in Applied Clifford Algebras}, 17(3):497--517, 2007.

\bibitem{BBS08}
Thomas Batard, Michel Berthier, and Christophe Saint-Jean.
\newblock {Clifford Fourier Transform for Color Image Processing}.
\newblock In E.~Bayro-Corrochano and G.~Scheuermann, editors, {\em Geometric
  Algebra Computing: In Engineering and Computer Science}, pages 135--162.
  Springer, London, UK, 2010.

\bibitem{BSS10}
Fred Brackx, Nele~De Schepper, and Frank Sommen.
\newblock {The Cylindrical Fourier Transform}.
\newblock In E.~Bayro-Corrochano and G.~Scheuermann, editors, {\em Geometric
  Algebra Computing: In Engineering and Computer Science}, pages 107--119.
  Springer, London, UK, 2010.

\bibitem{Fels02}
Michael Felsberg.
\newblock {\em {Low-Level Image Processing with the Structure Multivector}}.
\newblock PhD thesis, University of Kiel, Germany, 2002.

\bibitem{ES00}
Todd~A. Ell and Steven~J. Sangwine.
\newblock {The Discrete Fourier Transforms of a Colour Image}.
\newblock {\em Blackledge, J. M. and Turner, M. J., Image Processing II:
  Mathematical Methods, Algorithms and Applications, 430-441}, 2000.

\bibitem{ES07}
T.A. Ell and S.J. Sangwine.
\newblock Hypercomplex fourier transforms of color images.
\newblock {\em Image Processing, IEEE Transactions on}, 16(1):22--35, jan.
  2007.

\bibitem{BSH11}
Roxana Bujack, Gerik Scheuermann, and Eckhard Hitzer.
\newblock {A General Geometric Fourier Transform}.
\newblock In Eckhard Hitzer and Stephen~J. Sangwine, editors, {\em Quaternion
  and Clifford Fourier Transforms and Wavelets}, Birkh\"auser Basel, 2011.

\bibitem{HS84}
David Hestenes and Garret Sobczyk.
\newblock {\em {Clifford Algebra to Geometric Calculus}}.
\newblock D. Reidel Publishing Group, Dordrecht, Netherlands, 1984.

\bibitem{Hes86}
David Hestenes.
\newblock {\em {New Foundations for Classical Mechanics}}.
\newblock Kluwer Academic Publishers, Dordrecht, Netherlands, 1986.

\bibitem{Hitz10a}
Eckhard Hitzer.
\newblock {Angles Between Subspaces}.
\newblock In V.~Skala, editor, {\em Workshop Proceedings of Computer Graphics,
  Computer Vision and Mathematics}, Brno University of Technology, Czech
  Republic, 2010. UNION Agency.

\bibitem{Hitz10b}
Eckhard Hitzer.
\newblock Angles between subspaces computed in clifford algebra.
\newblock {\em AIP Conference Proceedings}, 1281(1):1476--1479, 2010.

\end{thebibliography}

\end{document}